\theoremstyle{plain}
\newtheorem{thm}{Theorem}
\newtheorem{lem}[thm]{Lemma}
\newtheorem{prop}[thm]{Proposition}
\newtheorem{remark}[thm]{Remark}
\newtheorem{remk}[thm]{Remark}
\theoremstyle{definition}
\newtheorem{definition}[thm]{Definition}
\newtheorem{defn}[thm]{Definition}
\newtheorem{exl}[thm]{Example}
\numberwithin{thm}{section}
\newcommand{\adj}{\leftrightarrow}
\newcommand{\adjeq}{\leftrightarroweq}
\DeclareMathOperator{\id}{id}
\DeclareMathOperator{\Fix}{Fix}
\def\Z{{\mathbb Z}}
\def\N{{\mathbb N}}
\def\R{{\mathbb R}}
\title{Cold and Freezing Sets in the Digital Plane}
\author{Laurence Boxer
\thanks{Department of Computer and Information Sciences, Niagara University,
        New York, 14109, USA;
        and Department of Computer Science and Engineering, State University
        of New York at Buffalo;
        boxer@niagara.edu}
}
\date{}
\begin{document}

\maketitle
\begin{abstract}
    Cold sets and freezing sets belong to the theory of (approximate) fixed points for
    continuous self-maps on digital images. We study some properties of cold sets for
    digital images in the digital plane, and we examine some relationships between cold sets
    and freezing sets.
    
{\em Key words and phrases}: digital topology, digital image, approximate fixed point, freezing set, cold set

MSC2020 classification: 54H30, 54H25
\end{abstract}

\section{Introduction}
Digital topology is concerned with exploring topological
and geometric properties of digital images as stored in
computer memory, i.e., as sets of discrete pixels, usually
treated as graphs in which some notion of ``nearness"
determines adjacency. Pioneering publications in the discipline 
include~\cite{RosenfeldMAA,Rosenfeld,KongEtal, KongKop}.
Considerable success has been obtained in showing that 
digital images and the Euclidean objects they represent are
often similar with respect to properties such as connectedness, 
fundamental group, contractibility, retraction, et al.
However, the discrete and usually finite nature of a graph 
often constricts continuous functions on digital images in ways 
unmatched by similar limitations for continuous
functions on Euclidean objects. Among these restrictions are 
those associated with cold sets and freezing sets.

Cold sets and freezing sets were introduced in~\cite{BxFpSets} in order to study properties
of fixed points and approximate fixed points in digital topology. Subsequent
papers~\cite{BxConvex,BxSubsets} developed our understanding of freezing sets. In this paper,
we give more attention to cold sets.

Among our results are the following.
\begin{itemize}
    \item Multiple results concerning points that must belong to a cold set for a given digital image:
          section~\ref{essentialPtSec} and Theorem~\ref{c1ConvexVertex}.
    \item Results for which cold sets and freezing sets coincide: Theorem~\ref{c1coldEquivFreeze}
          and Proposition~\ref{c2coldEquivFreeze}. These augment a result 
          of~\cite{BxFpSets} saying that for a digital image that is {\em rigid}, i.e., the only
          continuous self-map homotopic to the identity is the identity~\cite{hmps}, cold and
          freezing are equivalent.
          In general, {\em cold} and {\em freezing} are not equivalent~\cite{BxFpSets}.
\end{itemize}
Some of our results concerning when cold and freezing are equivalent 
          show that like freezing (0-cold) sets, cold (1-cold) sets are often found
          in the boundary of $X$. For $s > 1$, $s$-cold sets may be found in the
          boundary of $X$, as in Example~\ref{c2Square-n-cold}, or in
          the interior of $X$~\cite{BxFpSets}.

\section{Preliminaries}
Let $\N$ denote the set of natural numbers; $\N^* = \{0\} \cup \N$, the
set of nonnegative integers; $\Z$, the set of integers; and $\R$, the set of real numbers.
$\#X$ will be used for the number of members of a set~$X$.

\subsection{Adjacencies}
Material in this section is largely quoted or paraphrased from~\cite{bs19a}.

A digital image is a pair $(X,\kappa)$ where
$X \subset \Z^n$ for some $n$ and $\kappa$ is
an adjacency on $X$. Thus, $(X,\kappa)$ is a graph
for which $X$ is the vertex set and $\kappa$ 
determines the edge set. Usually, $X$ is finite,
although there are papers that consider infinite $X$. Usually, adjacency reflects some type of
``closeness" in $\Z^n$ of the adjacent points.
When these ``usual" conditions are satisfied, one
may consider a subset $Y$ of $\Z^n$ containing $X$ as a model of a
black-and-white ``real world" image in which
the black points (foreground) are represented by 
the members of $X$ and the white points 
(background) by members of $Y \setminus \{X\}$.

We write $x \adj_{\kappa} y$, or $x \adj y$ when
$\kappa$ is understood or when it is unnecessary to
mention $\kappa$, to indicate that $x$ and
$y$ are $\kappa$-adjacent. Notations 
$x \adjeq_{\kappa} y$, or $x \adjeq y$ when
$\kappa$ is understood, indicate that $x$ and
$y$ are $\kappa$-adjacent or are equal.

The most commonly used adjacencies are the
$c_u$ adjacencies, defined as follows.
Let $X \subset \Z^n$ and let $u \in \Z$,
$1 \le u \le n$. Then for points
\[x=(x_1, \ldots, x_n) \neq (y_1,\ldots,y_n)=y\]
we have $x \adj_{c_u} y$ if and only if
\begin{itemize}
    \item for at most $u$ indices $i$ we have
          $|x_i - y_i| = 1$, and
    \item for all indices $j$, $|x_j - y_j| \neq 1$
          implies $x_j=y_j$.
\end{itemize}

The $c_u$-adjacencies are often denoted by the
number of adjacent points a point can have in the
adjacency. E.g.,
\begin{itemize}
\item in $\Z$, $c_1$-adjacency is 2-adjacency;
\item in $\Z^2$, $c_1$-adjacency is 4-adjacency and
      $c_2$-adjacency is 8-adjacency;
\item in $\Z^3$, $c_1$-adjacency is 8-adjacency,
      $c_2$-adjacency is 18-adjacency, and 
      $c_3$-adjacency is 26-adjacency.
\end{itemize}
In this paper, we mostly use the $c_1$ and $c_2$ adjacencies in $\Z^2$.

Let $x \in (X,\kappa)$. We use the notations
\[  N(X,x,\kappa) = \{ y \in X \, | \, y \adj_{\kappa} x \}
\]
and
\[  N^*(X,x,\kappa) =  \{ y \in X \, | \, y \adjeq_{\kappa} x \} = N(X,x,\kappa) \cup \{x\}.
\]

We say $\{x_n\}_{n=0}^k \subset (X,\kappa)$ is a {\em $\kappa$-path} (or a {\em path} if $\kappa$ is understood)
from $x_0$ to $x_k$ if $x_i \adjeq_{\kappa} x_{i+1}$ for $i \in \{0,\ldots,k-1\}$, and $k$ is the {\em length} of the path.

A subset $Y$ of a digital image $(X,\kappa)$ is
{\em $\kappa$-connected}~\cite{Rosenfeld},
or {\em connected} when $\kappa$
is understood, if for every pair of points $a,b \in Y$ there
exists a $\kappa$-path in $Y$ from $a$ to $b$.

\subsection{Digitally continuous functions}
Material in this section is largely quoted or paraphrased from~\cite{bs19a}.

We denote by $\id$ or $\id_X$ the
identity map $\id(x)=x$ for all $x \in X$.

\begin{definition}
{\rm \cite{Rosenfeld, Bx99}}
Let $(X,\kappa)$ and $(Y,\lambda)$ be digital
images. A function $f: X \to Y$ is 
{\em $(\kappa,\lambda)$-continuous}, or
{\em digitally continuous} when $\kappa$ and
$\lambda$ are understood, if for every
$\kappa$-connected subset $X'$ of $X$,
$f(X')$ is a $\lambda$-connected subset of $Y$.
If $(X,\kappa)=(Y,\lambda)$, we say a function
is {\em $\kappa$-continuous} to abbreviate
``$(\kappa,\kappa)$-continuous."
\end{definition}

\begin{thm}
{\rm \cite{Bx99}}
A function $f: X \to Y$ between digital images
$(X,\kappa)$ and $(Y,\lambda)$ is
$(\kappa,\lambda)$-continuous if and only if for
every $x,y \in X$, if $x \adj_{\kappa} y$ then
$f(x) \adjeq_{\lambda} f(y)$.
\end{thm}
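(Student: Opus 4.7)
The plan is to prove the two implications by working directly from the path-based definition of $\kappa$-connectedness given in the Preliminaries.

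For the forward direction, I would assume $f$ is $(\kappa,\lambda)$-continuous and take any $x,y \in X$ with $x \adj_\kappa y$. The two-point set $\{x,y\}$ is $\kappa$-connected (the sequence $x,y$ is a $\kappa$-path of length~$1$), so by continuity the image $\{f(x),f(y)\}$ must be $\lambda$-connected. If $f(x)=f(y)$ we are done. Otherwise $\{f(x),f(y)\}$ is a two-point $\lambda$-connected set, and the only way a path inside this set can join the two distinct points is if $f(x) \adj_\lambda f(y)$, so in either case $f(x) \adjeq_\lambda f(y)$.

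For the reverse direction, I would assume the local condition and let $X'$ be an arbitrary $\kappa$-connected subset of $X$; the goal is to show that any two points of $f(X')$ are joined by a $\lambda$-path in $f(X')$. Pick $a,b \in f(X')$ with preimages $x,y \in X'$, and take a $\kappa$-path $x = x_0, x_1,\ldots, x_k = y$ in $X'$. For each consecutive pair $x_i, x_{i+1}$ either $x_i = x_{i+1}$, in which case $f(x_i)=f(x_{i+1})$, or $x_i \adj_\kappa x_{i+1}$, in which case the hypothesis gives $f(x_i) \adjeq_\lambda f(x_{i+1})$. Thus $f(x_0), f(x_1),\ldots, f(x_k)$ is a $\lambda$-path in $f(X')$ from $a$ to $b$, establishing $\lambda$-connectedness of $f(X')$.

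There is no real obstacle here: the argument is essentially a translation between the global (connected-set-to-connected-set) and local (edge-to-edge) formulations of continuity, and the only subtle point is being careful that $\adjeq_\lambda$, not $\adj_\lambda$, appears in the conclusion, since $f$ need not be injective. I would make sure to handle the $f(x)=f(y)$ case explicitly in the forward direction and the $x_i = x_{i+1}$ case explicitly in the reverse direction, since the definition of a $\kappa$-path uses $\adjeq_\kappa$ rather than $\adj_\kappa$.
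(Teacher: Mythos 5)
Your proof is correct. The paper itself gives no proof of this statement --- it is quoted as a known result from the reference \cite{Bx99} --- so there is nothing to compare against, but your argument is the standard one: both directions are handled properly, and you are right to flag the two subtleties (the case $f(x)=f(y)$ in the forward direction, and the fact that a $\kappa$-path uses $\adjeq$ rather than $\adj$ in the reverse direction), which is exactly where a careless version of this argument would go wrong.
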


A function $f: (X,\kappa) \to (Y,\lambda)$ is
an {\em isomorphism} (called a {\em homeomorphism}
in~\cite{Bx94}) if $f$ is a continuous bijection
such that $f^{-1}$ is continuous.

We use the following notation. For a
digital image $(X,\kappa)$,
\[ C(X,\kappa) = \{f: X \to X \, | \,
   f \mbox{ is continuous}\}.
\]

Given $f \in C(X,\kappa)$, a point
$x \in X$ is a {\em fixed point of $f$} if
$f(x)=x$. We denote by $\Fix(f)$ the set
$\{x \in X \, | \, x 
   \mbox{ is a fixed point of } f \}$.
A point $x \in X$ is an {\em almost fixed point}~\cite{Rosenfeld,TsSm} or
an {\em approximate fixed point}~\cite{BEKLL} of $f$ if $x \adjeq_{\kappa} f(x)$. Other papers in which approximate
fixed points were studied
include~\cite{BxNormal,BxAlt,BxApprox1,BxApprox2,BxConsequences,KangHan}.
The paper~\cite{KangHan} has inappropriate
citations and unoriginal results; these will be discussed
in section~\ref{HanSec}. However, one of the implications
of Theorem~4.4 of that paper is an important and original
contribution.

\subsection{Freezing and cold sets}
Material in this section is largely quoted or paraphrased from~\cite{BxFpSets}.

In a Euclidean space, knowledge of the fixed point set of a continuous self-map
$f: X \to X$ often gives little information about $f|_{X \setminus \Fix(f)}$. By contrast,
knowledge of $\Fix(f)$ for $f \in C(X,\kappa)$ can tell us much about $f|_{X \setminus \Fix(f)}$.
This motivates the study of freezing and cold sets.

\begin{definition}
\label{freezeDef}
{\rm \cite{BxFpSets}}
Let $(X,\kappa)$ be a digital image. We say $A \subset X$ is a 
{\em freezing set for $X$} if given $g \in C(X,\kappa)$,
$A \subset \Fix(g)$ implies $g=\id_X$. If no proper subset of a freezing set $A$ 
is a freezing set for $(X,\kappa)$, then $A$ is a {\em minimal freezing set}.
\end{definition}

\begin{defn}
\label{bdDef}
{\rm \cite{BxConvex}}
Let $X \subset \Z^n$.
\begin{itemize}
    \item The
{\em boundary of $X$ with respect to the $c_i$ adjacency},
$i \in \{1,2\}$, is
\[Bd_i(X) = \{x \in X \, | \mbox{ there exists } y \in \Z^n \setminus X \mbox{ such that } y \adj_{c_i} x\}.
\]
$Bd_1(X)$ is what is called the {\em boundary of $X$}
in~\cite{RosenfeldMAA}. This paper uses both $Bd_1(X)$ and $Bd_2(X)$.
\item The {\em interior of} $X$ with respect to the $c_i$ adjacency
is $Int_i(X) = X \setminus Bd_i(X)$.
\end{itemize}
\end{defn}

\begin{thm}
{\rm \cite{BxFpSets}}
\label{bdFreezes}
Let $X \subset \Z^n$ be finite. Then for $1 \le u \le n$, $Bd_1(X)$ is 
a freezing set for $(X,c_u)$.
\end{thm}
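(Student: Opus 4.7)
The plan is to prove directly that $g(x)=x$ for every $x \in X$, one coordinate at a time, by exploiting that the extreme points of $X$ along each axis-parallel line through $x$ must lie in $Bd_1(X)$ and are therefore pinned down by the hypothesis, while $c_u$-continuity then propagates this constraint back along the segment to $x$.

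First I would fix $x = (x_1,\ldots,x_n) \in X$ and an index $i \in \{1,\ldots,n\}$. The case $x \in Bd_1(X)$ is immediate from the assumption, so I may assume otherwise. Using finiteness of $X$, pick the largest integers $k^+_i,k^-_i \ge 0$ for which the points
$x^{\pm} := (x_1,\ldots,x_{i-1}, x_i \pm k^{\pm}_i, x_{i+1},\ldots,x_n)$ both lie in $X$. Maximality forces the next step out of $X$: $(x_1,\ldots,x_i \pm (k^{\pm}_i+1),\ldots,x_n) \notin X$. Since the missing point is $c_1$-adjacent to $x^{\pm}$, we get $x^{\pm} \in Bd_1(X)$, and hence $g(x^{\pm}) = x^{\pm}$ by hypothesis. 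Setting $m = k^-_i + k^+_i$, the axis-parallel sequence $p_j := (x_1,\ldots,x_i + j - k^-_i,\ldots,x_n)$, $0 \le j \le m$, is a $c_1$-path in $X$ with $p_0 = x^-$, $p_m = x^+$, and $p_{k^-_i} = x$; in particular it is a $c_u$-path for every $u$.

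The main step is to show that $g(p_j)_i = (p_j)_i$ for every $j$. Continuity of $g$ gives $g(p_j) \adjeq_{c_u} g(p_{j+1})$, and the definition of $c_u$-adjacency forces each coordinate difference to be at most $1$ in absolute value; in particular $|g(p_{j+1})_i - g(p_j)_i| \le 1$. But $g(p_0)_i$ and $g(p_m)_i$ already differ by $k^-_i + k^+_i = m$, the length of the walk, so each of the $m$ steps must change coordinate $i$ by exactly $+1$. Telescoping yields $g(p_j)_i = x_i + j - k^-_i$; taking $j = k^-_i$ gives $g(x)_i = x_i$. Repeating this for every $i$ produces $g(x)=x$, and since $x$ was arbitrary, $g = \id_X$.

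The only delicate point I foresee is the assertion that any $c_u$-adjacency, for every $1 \le u \le n$, constrains \emph{every} coordinate — not merely the $u$ that happen to change — by at most $1$ in absolute value. This is immediate from the second bullet of the $c_u$ definition in the excerpt, and it is precisely what lets a single argument handle all $u \in \{1,\ldots,n\}$ uniformly, with no case split on $u$ and no induction on the dimension. Consequently I do not anticipate a genuine obstacle beyond careful bookkeeping of the indices $k^{\pm}_i$.
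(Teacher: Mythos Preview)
The paper does not prove this statement (it is quoted from \cite{BxFpSets}), but the intended argument, as indicated by Lemma~\ref{c1pulling} and its use in the closely related Theorem~\ref{bdCurveFreezes}, is the same idea as yours: run the axis-parallel segment through $x$ out to its two endpoints in $Bd_1(X)$ and exploit that $c_u$-adjacency bounds every coordinate difference by $1$. Where the original argues by contradiction via the pulling lemma, you give a direct telescoping count; the content is identical.

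One wording fix: as stated, $k^{\pm}_i$ is the largest $k$ with the single point $x \pm k e_i$ in $X$, which does not guarantee that the intermediate points $p_j$ lie in $X$ (nothing in the hypotheses rules out gaps along an axis line). You should instead take $k^{\pm}_i$ to be the largest $k$ for which the entire run $\{x \pm j e_i : 0 \le j \le k\}$ is contained in $X$; maximality then still forces $x^{\pm} + (\pm e_i) \notin X$, so $x^{\pm} \in Bd_1(X)$, and the rest of your argument goes through unchanged.
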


\begin{thm}
{\rm \cite{BxFpSets}}
\label{corners-min}
Let $X = \Pi_{i=1}^n [0,m_i]_{\Z}$.
Let $A = \Pi_{i=1}^n \{0,m_i\}$.
\begin{itemize}
\item Let $Y = \Pi_{i=1}^n [a_i,b_i]_{\Z}$ be
      such that $X \subset Y$. Let $f: X \to Y$ be
      $c_1$-continuous. If $A \subset \Fix(f)$, then $X \subset \Fix(f)$.
\item $A$ is a freezing set for $(X,c_1)$; minimal for $n \in \{1,2\}$.
\end{itemize}
\end{thm}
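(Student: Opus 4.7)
My plan is to prove (1) by exploiting the $L^1$ (taxicab) metric $d$ on $\Z^n$, which coincides with the graph distance on $(X,c_1)$. The first observation is that any $c_1$-continuous $f\colon X \to Y$ is non-expansive in $d$: $c_1$-adjacent points lie at $L^1$-distance $1$, so continuity forces $d(f(x),f(y)) \le 1$ for such a pair, and concatenating along a coordinate-by-coordinate shortest path in $X$ gives $d(f(x),f(y)) \le d(x,y)$ in general. Since every corner $c \in A$ is fixed,
\[
  d(f(x),c) \;=\; d(f(x),f(c)) \;\le\; d(x,c) \quad \text{for every } c \in A.
\]

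Next, I would sum this inequality over $A$ and compute the right-hand side coordinate by coordinate: for each $i$, exactly $2^{n-1}$ corners have $c_i = 0$ and $2^{n-1}$ have $c_i = m_i$, contributing $2^{n-1} x_i + 2^{n-1}(m_i - x_i) = 2^{n-1} m_i$. Hence $\sum_{c \in A} d(x,c) = 2^{n-1} \sum_i m_i$, a constant on $X$. The same calculation shows that if $z \in Y$ has some coordinate $z_i \notin [0,m_i]_{\Z}$, then $\sum_{c \in A} d(z,c) > 2^{n-1}\sum_i m_i$; consequently the displayed inequality forces $f(x) \in X$ and, by saturation of the sum, the termwise equality $d(f(x),c) = d(x,c)$ for every $c \in A$. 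Finally, taking the corner $c^0 = (0,\ldots,0)$ and, for each $i$, the corner $c^{(i)}$ whose $i$-th coordinate equals $m_i$ and whose remaining coordinates are $0$, one has $d(x,c^{(i)}) - d(x,c^0) = m_i - 2x_i$ for every $x \in X$ and similarly for $f(x)$; subtracting yields $f_i(x) = x_i$ for each $i$, and therefore $f(x) = x$.

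Part (2) follows quickly. Taking $Y = X$ in (1) shows $A$ is a freezing set for $(X,c_1)$. For minimality when $n = 1$, removing either point of $\{0,m_1\}$ leaves the constant map at the surviving point, which is continuous, fixes the remaining corner, and is not $\id_X$. For $n = 2$ (with $m_1,m_2 \ge 1$), if we delete the corner $c = (m_1,m_2)$, define $g\colon X \to X$ by $g = \id$ on $X \setminus \{c\}$ and $g(c) = (m_1-1, m_2-1)$. The only $c_1$-neighbors of $c$ in $X$ are $(m_1-1,m_2)$ and $(m_1,m_2-1)$, and each is $c_1$-adjacent to $(m_1-1,m_2-1)$, so $g$ is continuous, fixes the other three corners, and is not $\id_X$. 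The same inward-diagonal trick handles each of the four corners.

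The decisive step is in (1): global fixed-point data at the $2^n$ corners must be converted into pointwise information on all of $X$, and the key is that $\sum_{c \in A} d(x,c)$ is constant on $X$ yet strictly larger on $Y \setminus X$, so the non-expansion inequality saturates at every corner at once and pins $f(x)$ down coordinate by coordinate. The minimality claims in (2) then reduce to continuity checks on explicit non-identity maps.
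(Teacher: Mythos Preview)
This theorem is quoted in the preliminaries from \cite{BxFpSets} and no proof is given in the present paper, so there is no in-paper argument to compare against directly. Your proof is correct in both parts. The tools the paper does deploy for kindred assertions are Proposition~\ref{uniqueShortestProp} and Lemma~\ref{c1pulling}, and a proof in that spirit would be more combinatorial: first fix the edges of the box joining adjacent corners via unique shortest paths, then fill in the interior coordinate-by-coordinate using the pulling lemma. Your $L^1$-metric approach is genuinely different and more uniform: the single identity $\sum_{c\in A}\|x-c\|_1 = 2^{n-1}\sum_i m_i$ on $X$ (with strict inequality on $Y\setminus X$) forces all $2^n$ non-expansion inequalities $d(f(x),c)\le d(x,c)$ to saturate at once, after which the differences $d(x,c^{(i)})-d(x,c^0)=m_i-2x_i$ recover each coordinate. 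This buys a short, dimension-independent argument for part~(1) with no case analysis. Your minimality witnesses in part~(2) are exactly what is needed; the parenthetical $m_1,m_2\ge 1$ is indeed required for the inward-diagonal map at a corner to land in $X$ and differ from $\id_X$.
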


\begin{thm}
{\rm \cite{BxFpSets}}
\label{noProperSub}
Let $X = \prod_{i=1}^n[0,m_i]_{\Z}  \subset \Z^n$, where $m_i>1$ for all~$i$.
Then $Bd_1(X)$ is a minimal freezing set for $(X,c_n)$.
\end{thm}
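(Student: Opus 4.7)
Theorem~\ref{bdFreezes} (applied with $u=n$) already gives that $Bd_1(X)$ is a freezing set for $(X,c_n)$. So the task is minimality: for each $p \in Bd_1(X)$, I want to produce a $c_n$-continuous map $g: X \to X$ with $g \neq \id_X$ and $Bd_1(X) \setminus \{p\} \subset \Fix(g)$, which will show that no proper subset of $Bd_1(X)$ obtained by deleting a single point can be a freezing set, and hence no proper subset at all.

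The construction is a ``pinch one boundary point inward'' map. Fix $p = (p_1,\ldots,p_n) \in Bd_1(X)$. By definition of $Bd_1$, there is an index $i$ with $p_i \in \{0, m_i\}$. Let
\[
q = \begin{cases} p + e_i & \text{if } p_i = 0,\\ p - e_i & \text{if } p_i = m_i,\end{cases}
\]
where $e_i$ is the $i$-th standard basis vector. Since $m_i > 1$, in both cases $q \in X$ and $q \neq p$. Define $g: X \to X$ by $g(p) = q$ and $g(x) = x$ for $x \neq p$. Then clearly $g \neq \id_X$ and $Bd_1(X) \setminus \{p\} \subset \Fix(g)$ (note $q$ itself may or may not lie in $Bd_1(X)$, but either way $g(q) = q$).

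The main thing to check is that $g$ is $c_n$-continuous. Using the characterization that $g$ is continuous iff $x \adj_{c_n} y$ implies $g(x) \adjeq_{c_n} g(y)$, the only nontrivial adjacencies involve $p$. So I need to verify that for every $y \in N(X,p,c_n)$, one has $y \adjeq_{c_n} q$, i.e.\ $|y_j - q_j| \le 1$ for all $j$. For $j \neq i$, $q_j = p_j$ and $|y_j - p_j| \le 1$ since $y \adj_{c_n} p$, so this holds. For $j = i$, say $p_i = 0$: then $y_i \in \{0,1\}$ (both $|y_i - 0| \le 1$ and $y_i \ge 0$), hence $|y_i - 1| \le 1$. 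The case $p_i = m_i$ is symmetric. This verifies continuity.

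The potential obstacle was whether a single ``move'' of $p$ could be absorbed without forcing other points to move, but the looseness of the $c_n$ adjacency (any two points within $\ell^\infty$-distance $1$ are adjacent or equal) makes exactly this kind of single-pixel perturbation automatically continuous, which is why the argument works for $c_n$ and would fail for tighter adjacencies. Combining the freezing property from Theorem~\ref{bdFreezes} with the non-freezing of every $Bd_1(X) \setminus \{p\}$ established above yields minimality.
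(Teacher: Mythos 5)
Your proof is correct. Note that the paper itself states Theorem~\ref{noProperSub} as a citation from~\cite{BxFpSets} and gives no proof, so there is nothing in-paper to compare against; your argument --- freezing via Theorem~\ref{bdFreezes}, and minimality via the single-point inward perturbation $g(p)=q$, $g|_{X\setminus\{p\}}=\id$, whose $c_n$-continuity follows because every $y\in N(X,p,c_n)$ satisfies $|y_j-q_j|\le 1$ in all coordinates --- is exactly the standard argument used in the cited source, and your reduction from ``no proper subset'' to ``no one-point deletion'' is sound since any $A\subsetneq Bd_1(X)$ lies in some $Bd_1(X)\setminus\{p\}$.
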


In the following, we use the path-length metric $d$ for
connected digital images $(X,\kappa)$,
defined~\cite{Han05} as
\[ d_{\kappa}(x,y)= \min\{\ell \, | \, \ell
      \mbox{ is the length of a $\kappa$-path in $X$ from $x$ to $y$} \}.
\]
If $X$ is finite and $\kappa$-connected, the {\em diameter} of $(X,\kappa)$ is
\[ diam(X,\kappa) = \max\{d_{\kappa}(x,y) \, | \, x,y \in X\}.
\]

\begin{definition}
\label{s-cold-def}
{\rm \cite{BxFpSets}}
Given $s \in \N^*$, we say $A \subset X$ is an
{\em $s$-cold set} for the connected digital image $(X,\kappa)$
if given $f \in C(X,\kappa)$ such that
$f|_A = \id_A$, then for all $x \in X$, $d_{\kappa}(x,f(x)) \le s$.
If no proper subset of $A$ is an $s$-cold set
for $(X,\kappa)$, then $A$ is {\em minimal}.
A {\em cold set} is a 1-cold set.
\end{definition}

\begin{thm}
\label{s-cold-invariant}
{\rm \cite{BxFpSets}}
Let $(X,\kappa)$ be a connected digital image, 
let $A$ be an $s$-cold set for $(X,\kappa)$,
and let $F: (X,\kappa) \to (Y,\lambda)$ be an
isomorphism. Then $F(A)$ is
an $s$-cold set for $(Y,\lambda)$.
\end{thm}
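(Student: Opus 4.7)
The plan is to transport a continuous self-map of $(Y,\lambda)$ that fixes $F(A)$ back through $F^{-1}$ to obtain a continuous self-map of $(X,\kappa)$ that fixes $A$, apply the $s$-cold hypothesis there, and then push the resulting inequality forward via $F$. The key auxiliary fact, which I will use without elaborate comment, is that an isomorphism preserves the path-length metric: since both $F$ and $F^{-1}$ are continuous bijections, each $\kappa$-path from $x$ to $x'$ in $X$ is carried by $F$ to a $\lambda$-path of the same length from $F(x)$ to $F(x')$, and conversely each $\lambda$-path from $F(x)$ to $F(x')$ is carried by $F^{-1}$ to a $\kappa$-path of the same length. Hence $d_\lambda(F(x),F(x')) = d_\kappa(x,x')$ for all $x,x' \in X$.

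Given this, let $g \in C(Y,\lambda)$ with $g|_{F(A)} = \id_{F(A)}$. Define
\[
f = F^{-1} \circ g \circ F : X \to X.
\]
As a composition of continuous functions, $f \in C(X,\kappa)$. For any $a \in A$, since $F(a) \in F(A)$ we have $g(F(a)) = F(a)$, so $f(a) = F^{-1}(F(a)) = a$; that is, $f|_A = \id_A$. Because $A$ is an $s$-cold set for $(X,\kappa)$, it follows that $d_\kappa(x,f(x)) \le s$ for every $x \in X$.

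To conclude, let $y \in Y$ be arbitrary; since $F$ is a bijection, $y = F(x)$ for a unique $x \in X$. Then
\[
g(y) = g(F(x)) = F(F^{-1}(g(F(x)))) = F(f(x)),
\]
so by the isometric property of $F$,
\[
d_\lambda(y, g(y)) = d_\lambda(F(x), F(f(x))) = d_\kappa(x, f(x)) \le s.
\]
Thus $F(A)$ is an $s$-cold set for $(Y,\lambda)$.

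The only non-routine step is the isometry claim, and even that is straightforward from the definition of an isomorphism in this setting; the rest is the standard conjugation argument, analogous to proving that freezing sets, retracts, and similar structural features are invariant under isomorphism. I do not expect any serious obstacle.
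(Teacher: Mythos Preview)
Your argument is correct. Note, however, that the paper does not give its own proof of this theorem: it is stated with the citation \cite{BxFpSets} and no proof appears in the present paper. Your conjugation argument, transporting $g$ back through $F^{-1}$ and using that an isomorphism is an isometry for the path-length metric, is exactly the standard proof one would expect (and presumably the one in \cite{BxFpSets}); there is nothing to compare against here.
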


\begin{remark}
{\rm \cite{BxFpSets}}
\label{coldRemark}
The following are easily observed.
\begin{enumerate}
    \item A 0-cold set is a freezing set.
    \item If $A \subset A' \subset X$ and $A$ is an $s$-cold set for
          $(X,\kappa)$, then $A'$ is an $s$-cold set for $(X,\kappa)$.
    \item $A$ is a cold set (i.e., a 1-cold set)
          for $(X,\kappa)$ if and only if given $f \in C(X,\kappa)$ 
          such that $f|_A = \id_A$, every $x \in X$ is an approximate fixed point of $f$.
    \item In a finite connected digital image $(X,\kappa)$, every nonempty
          subset of $X$ is a $diam(X)$-cold set.
    \item If $s_0 < s_1$ and $A$ is an $s_0$-cold set
          for $(X,\kappa)$, then $A$ is an $s_1$-cold set for $(X,\kappa)$.
\end{enumerate}
\end{remark}

\subsection{Digital disks and bounding curves}
Material in this section is largely quoted or paraphrased from~\cite{BxConvex}.

Let $\kappa \in \{c_1,c_2\}$, $n>1$. We say a $\kappa$-connected 
set $S=\{x_i\}_{i=1}^n \subset \Z^2$ is a
{\em (digital) line segment} if the members of $S$ are collinear.

\begin{remk}
\label{segSlope}
{\rm \cite{BxConvex}}
A digital line segment must be vertical, horizontal, or have slope of $\pm 1$.
\end{remk}

We say a segment with slope of $\pm 1$ is
{\em slanted}. An {\em axis-parallel} segment is horizontal or vertical.

A {\em (digital) $\kappa$-closed curve} is a
path $S=\{s_i\}_{i=0}^m$ such that $s_0=s_m$,
and $0 < |i - j| < m$ 
implies $s_i \neq s_j$. If, also, $0 \le i < n$ implies
\[ N(S,x_i,\kappa)=\{x_{(i-1)\mod n},x_{(i+1)\mod n}\}
\]
$S$ is a {\em (digital) $\kappa$-simple closed curve}.
For a simple closed curve $S \subset \Z^2$ we generally assume
\begin{itemize}
    \item $m \ge 8$ if $\kappa = c_1$, and
    \item $m \ge 4$ if $\kappa = c_2$.
\end{itemize}
These requirements are necessary for the Jordan Curve
Theorem of digital topology, below, as a
$c_1$-simple closed curve in $\Z^2$ must have at least 8 points to
have a nonempty finite complementary $c_2$-component,
and a $c_2$-simple closed curve in $\Z^2$ must have at least 4 points to
have a nonempty finite complementary $c_1$-component.
Examples in~\cite{RosenfeldMAA} show why it is
desirable to consider $S$ and $\Z^2 \setminus S$
with different adjacencies.

\begin{thm}
{\rm \cite{RosenfeldMAA}}
{\em (Jordan Curve Theorem for digital topology)}
Let $\{\kappa, \kappa'\} = \{c_1, c_2\}$.
Let $S \subset \Z^2$ be a simple closed 
$\kappa$-curve such that $S$ has at least 8 points if
$\kappa = c_1$ and such that $S$ has at least 
4 points if $\kappa = c_2$. Then
$\Z^2 \setminus S$ has exactly 2 $\kappa'$-connected
components.
\end{thm}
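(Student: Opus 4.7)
The plan is to adapt the classical Jordan Curve Theorem argument to the digital setting. I would split the proof into two halves: first, exhibiting at least two $\kappa'$-components of $\Z^2 \setminus S$, and second, showing there are at most two.

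For the first half, the exterior component $E$ is easy: since $S$ is finite, fix $N$ larger than all coordinate magnitudes appearing in $S$; every point of $\Z^2 \setminus S$ with some coordinate of absolute value exceeding $N$ can be $\kappa'$-pathed to $(N+1,N+1)$ via row and column moves avoiding $S$, so these points lie in a single $\kappa'$-component. To find a second component, I would define a parity map $\pi : \Z^2 \setminus S \to \{0,1\}$ by firing a horizontal ray eastward from $p$ and counting ``essential'' crossings with $S$ modulo $2$, where a crossing is essential precisely when the portion of $S$ met is not tangent (horizontal) to the ray. With the right accounting, $\pi$ is constant on each $\kappa'$-component because any single $\kappa'$-step in $\Z^2 \setminus S$ (whether axis-parallel, or diagonal in the $c_2$ case) changes the crossing count by an even number. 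The size hypotheses on $\#S$ then guarantee that at least one point of $\Z^2\setminus S$ has $\pi = 1$, giving a second component.

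For the second half, I would work locally. Around each $s_i \in S$, the simple closed curve condition $N(S, s_i, \kappa) = \{s_{i-1}, s_{i+1}\}$ means only a controlled set of points of $\Z^2 \setminus S$ is $\kappa'$-adjacent to $s_i$; moreover, these points fall into exactly two local ``sides'' of the arc $s_{i-1} s_i s_{i+1}$. One then shows that traversing from $s_i$ to $s_{i+1}$, the left side near $s_i$ is joined via a short $\kappa'$-path in $\Z^2 \setminus S$ to the left side near $s_{i+1}$, and similarly on the right. Iterating around $S$, the $\kappa'$-equivalence relation on points $\kappa'$-adjacent to $S$ has at most two classes. A shortest-path argument then shows every $\kappa'$-component of $\Z^2 \setminus S$ must meet some point $\kappa'$-adjacent to $S$, completing the bound.

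The main obstacle will be the local case analysis at ``corners'' of $S$: one must enumerate the possible configurations of the triple $(s_{i-1}, s_i, s_{i+1})$ in a $\kappa$-simple closed curve and verify, for each, that the two local sides consist of $\kappa'$-distinct point sets and that they propagate coherently across the corner. This is precisely where the duality $\{\kappa,\kappa'\} = \{c_1,c_2\}$ is indispensable: using the same adjacency on both $S$ and its complement allows the two sides to merge at a diagonal turn, producing the pathological examples of \cite{RosenfeldMAA} and causing the separation to fail. The size bounds $\#S \geq 8$ for $c_1$ and $\#S \geq 4$ for $c_2$ play a parallel role, ruling out degenerate ``curves'' too small to separate their complement.
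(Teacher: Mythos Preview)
The paper does not prove this theorem. It is stated as a background result, attributed to Rosenfeld's paper~\cite{RosenfeldMAA}, and used only to set up the definition of digital disks and bounding curves; no argument for it appears anywhere in the paper. So there is nothing in the paper to compare your proposal against.

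For what it is worth, your outline follows the standard route to the digital Jordan curve theorem: a parity (ray-crossing) argument to separate an interior from the exterior, and a local two-sides-that-propagate argument to bound the number of components by two. Both halves are correct in spirit, and you have correctly located where the real work lies, namely the corner case analysis and the necessity of the dual pair $\{c_1,c_2\}$. One caution if you flesh this out: the ray-crossing parity is delicate when the horizontal ray passes through a lattice point of $S$ or runs along a horizontal stretch of $S$; making ``essential crossing'' well-defined and invariant under a single $\kappa'$-step typically requires either shifting the ray to a half-integer height or carefully specifying how to count crossings at vertices. Your phrase ``the portion of $S$ met is not tangent (horizontal) to the ray'' gestures at this but would need to be made precise, since a horizontal segment of $S$ can connect two vertical segments lying on the same side of the ray (tangent, contributes $0$) or on opposite sides (a genuine crossing, contributes $1$).
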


One of the $\kappa'$-components of 
$\Z^2 \setminus S$ is finite and the other is infinite. This 
suggests the following.
\begin{defn}
\label{diskDef}
{\rm \cite{BxConvex}}
Let $S \subset \Z^2$ be a $c_2$-closed curve such that
$\Z^2 \setminus S$ has two $c_1$-components, one finite and the
other infinite. The union $D$ of $S$ and the finite $c_1$-component 
of $\Z^2 \setminus S$ is a {\em (digital) disk}. $S$ is
a {\em bounding curve} of $D$. The finite $c_1$-component 
of $\Z^2 \setminus S$ is the {\em interior of} $S$, denoted $Int(S)$,
and the infinite $c_1$-component of $\Z^2 \setminus S$ is the {\em exterior of} 
$S$, denoted $Ext(S)$.
\end{defn}

Notes:
\begin{itemize}
    \item If $D$ is a digital disk determined as above by a bounding 
    $c_2$-closed curve $S$, then $(S,c_1)$ can be 
    disconnected. See Figure~\ref{fig:diamond}.
    \item There may be more than one closed curve $S$
          bounding a given disk~$D$. See Figure~\ref{fig:2sccBdry}.
          When $S$ is understood as a bounding curve of a disk $D$,
          we use the notations $Int(S)$ and $Int(D)$ interchangeably.
    \item Since we are interested in finding {\em minimal}
          freezing or cold sets and since it turns out we often compute these
          from bounding curves, we may prefer those of
          minimal size. A bounding curve~$S$
          for a disk $D$ is {\em minimal} if there is no
          bounding curve $S'$ for $D$ such that
          $\#S' < \#S$.
    \item In particular, a bounding 
          curve need not be contained in $Bd_1(D)$.
          E.g., in the disk~$D$
          shown in Figure~\ref{fig:2sccBdry}(i), $(2,2)$ is a point
          of the bounding curve; however, all of the points
          $c_1$-adjacent to $(2,2)$ are members of~$D$, so
          by Definition~\ref{bdDef}, $(2,2) \not \in Bd_1(D)$.
          However, a bounding curve for $D$ must be contained
          in $Bd_2(D)$.
    \item In Definition~\ref{diskDef}, we use $c_2$ adjacency for
          $S$ and we do not require $S$ to be simple. 
          Figure~\ref{fig:2sccBdry} shows why these seem
          appropriate.
          \begin{itemize}
              \item The $c_2$ adjacency allows slanted
              segments in bounding curves and makes possible
          a bounding curve in subfigure~(ii) with fewer points
          than the bounding curve in subfigure~(i) in which
          adjacent pairs of the bounding curve are restricted
          to $c_1$ adjacency.
          \item Neither of the bounding curves shown in
                Figure~\ref{fig:2sccBdry} is a $c_2$-simple closed
                curve. E.g., non-consecutive points of each of
                the bounding curves,
                $(0,1)$ and $(1,0)$, are $c_2$-adjacent. The
                bounding curve shown in 
                Figure~\ref{fig:2sccBdry}(ii) is clearly also not a
                $c_1$-simple closed curve.
          \end{itemize}
    \item A closed curve that is not simple may be the boundary~$Bd_2$
          of a digital image that is not a disk. This is illustrated
          in Figure~\ref{fig:notDisk}.
\end{itemize}

\begin{figure}
    \centering
    \includegraphics[height=1.5in]{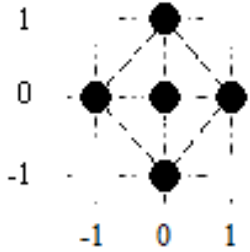}
    \caption{\cite{BxConvex} The $c_1$-disk
    $D = \{(x,y) \in \Z^2 \, | \, |x| + |y| < 2\}$.
    The bounding curve $S = \{(x,y) \in \Z^2 \, | \,
    |x| + |y| =1\} = D \setminus \{(0,0)\}$ is not $c_1$-connected.
    }
    \label{fig:diamond}
\end{figure}

\begin{figure}
    \centering
    \includegraphics[height=1.25in]{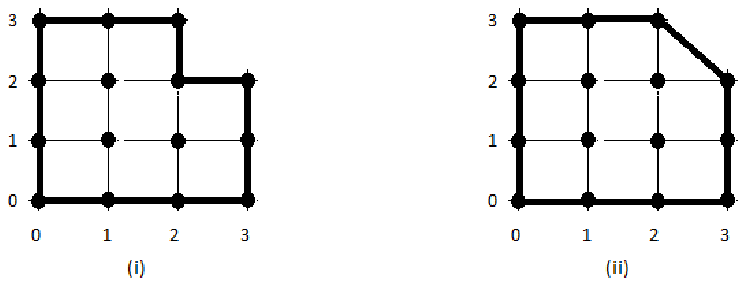}
    \caption{\cite{BxConvex} Two views of $D = [0,3]_{\Z}^2 \setminus \{(3,3)\}$, 
    which can be regarded as a $c_1$-disk with either of the
    closed curves shown in dark as a bounding curve. \newline
    (i) The dark line segments show a $c_1$-simple closed curve $S$
    that is a bounding curve for~$D$.
    Note the point $(2,2)$ in the bounding curve shown.
    By Definition~\ref{bdDef},
    $(2,2) \not \in Bd_1(D)$; however, $(2,2) \in Bd_2(D)$. \newline
    (ii) The dark line segments show a $c_2$-closed curve $S$
    that is a minimal bounding curve for~$D$.
       }
    \label{fig:2sccBdry}
\end{figure}

\begin{figure}
    \centering
    \includegraphics{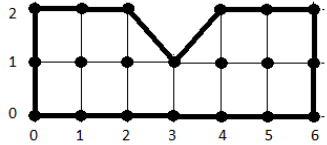}
    \caption{\cite{BxConvex} $D = [0,6]_{\Z} \times [0,2]_{\Z} \setminus \{(3,2)\}$
     shown with a bounding curve $S$ in dark segments. $D$ is
     not a disk with either the $c_1$ or the $c_2$ adjacency,
     since with either of these adjacencies,
     $\Z^2 \setminus S$ has two bounded components,
     $\{(1,1), (2,1)\}$ and $\{(4,1), (5,1)\}$.
     }
    \label{fig:notDisk}
\end{figure}

More generally, we have the following.
\begin{defn}
\label{boundingCurvesDef}
{\rm \cite{BxConvex}}
Let $X \subset \Z^2$ be a finite, $c_i$-connected set,
$i \in \{1,2\}$. Suppose there are 
pairwise disjoint $c_2$-closed curves
$S_j \subset X$, $1 \le j \le n$, such that
\begin{itemize}
    \item $X \subset S_1 \cup Int(S_1)$;
    \item for $j>1$, $D_j = S_j \cup Int(S_j)$ is a digital disk;
    \item no two of 
    \[ S_1 \cup Ext(S_1), D_2, \ldots, D_n
    \]
    are $c_1$-adjacent or $c_2$-adjacent; and
    \item we have 
    \[ \Z^2 \setminus X = Ext(S_1) \cup \bigcup_{j=2}^n Int(S_j).
    \]
\end{itemize}
Then $\{S_j\}_{j=1}^n$ is a {\em set of bounding curves of} $X$.
\end{defn}

Note: As above, a digital image $X \subset \Z^2$ may have more than one
set of bounding curves.

 \begin{figure}
        \centering
        \includegraphics[height=1in]{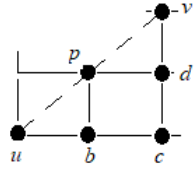}
        \caption{\cite{BxConvex}
        $p \in \overline{uv}$ in a bounding curve,
        with $\overline{uv}$ slanted.
        Note $u \not \adj_{c_1} p \not \adj_{c_1} v$,
        $p \adj_{c_2} c \not \adj_{c_1} p$,
        $\{p,c\} \subset N(\Z^2,c_1,b) \cap N(\Z^2,c_1,d)$. If
        $X$ is slant-thick at $p$ then $c \in X$.
        (Not meant to be understood as showing all of $X$.)}
        \label{fig:innerBdPt}
    \end{figure}

\subsection{Thickness}
A notion of ``thickness" in a digital image $X$, introduced in~\cite{BxConvex},
means, roughly speaking, $X$ is ``locally" like a disk.

Our definition of thickness depends on a notion of an
``interior angle" of a disk. We have the following.

\begin{definition}
\label{interiorAngleDef}
{\rm \cite{BxConvex}}
Let $s_1$ and $s_2$ be sides of a digital disk
$X \subset \Z^2$, i.e., maximal digital line segments
in a bounding curve $S$ of $X$, such that 
$s_1 \cap s_2 = \{p\} \subset X$.
The {\em interior angle of $X$ at $p$} is the
angle formed by $s_1$, $s_2$, and $Int(S)$.
\end{definition}

\begin{definition}
Let $X \subset \Z^2$ be a digital disk. Let $S$ be a bounding curve of $X$ and $p \in S$.
\begin{itemize}
    \item Suppose $p$ is in a maximal slanted segment $\sigma$ of $S$ such that
          $p$ is not an endpoint of $\sigma$. Then $X$ is {\em slant-thick at $p$} 
          if there exists $c \in X$ such that (see Figure~\ref{fig:innerBdPt})
          \begin{equation}
          \label{slantSegProp}
           c \adj_{c_2} p \not \adj_{c_1} c,
          \end{equation}
    \item Suppose $p$ is the vertex of a 90$^\circ$ ($ \pi / 2$ radians) interior angle
              $\theta$ of $S$. Then $X$ is {\em $90^\circ$-thick at $p$} if there
              exists $q \in Int(X)$ such that
              \begin{itemize}
                  \item if $\theta$ has axis-parallel sides then
                        $q \adj_{c_2} p \not \adj_{c_1} q$ (see Figure~\ref{fig:degrees90a}(1));
                  \item if $\theta$ has slanted sides then $q \adj_{c_1} p$ (see
                        Figure~\ref{fig:degrees90a}(2)).
              \end{itemize}
    \item Suppose $p$ is the vertex of a 135$^\circ$ ($3 \pi / 4$
          radians) interior angle $\theta$ of $S$. Then $X$ is
          {\em 135$^\circ$-thick at $p$} if there exist $b,b' \in X$
          such that $b$ and $b'$ are in the interior of $\theta$ and
          (see Figure~\ref{fig:degrees135c1})
          \[ b \adj_{c_2} p \not \adj_{c_1} b~~~ \mbox{ and }~~~ 
                b' \adj_{c_1} p.
          \]
\end{itemize}
\end{definition}

\begin{defn}
\label{thickness}
{\rm \cite{BxConvex}}
Let $X \subset \Z^2$ be a digital disk. We say $X$ is
{\em thick} if the following are satisfied. For some bounding
curve $S$ of $X$,
\begin{itemize}
    \item for every maximal slanted segment of~$S$,
          if  $p \in S$ is not an endpoint of  $S$, 
          then $X$ is slant-thick at $p$, 
          and
    \item for every $p$ that is the vertex of a 90$^\circ$ ($ \pi / 2$
          radians) interior angle $\theta$ of $S$, $X$ is {\em $90^\circ$-thick at $p$}, 
          and
\item for every $p$ that is the vertex of a 135$^\circ$ ($3 \pi / 4$
      radians) interior angle $\theta$ of $S$, $X$ is 135$^\circ$-thick at $p$.
\end{itemize}
\end{defn}

       \begin{figure}
        \centering
        \includegraphics[height=0.75in]{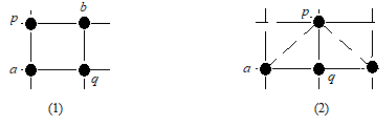}
        \caption{(1) $\angle apb$ is a
        $90^{\circ}$ ($\pi/2$ radians)
        angle of a bounding curve of $X$ at $p \in A_1$, with
        horizontal and vertical sides. If $X$ is $90^{\circ}$-thick
        at $p$ then $q \in Int(X)$. (Not meant to
        be understood as showing all of $X$.)\newline
        (2) $\angle apb$ is a
        $90^\circ$ ($\pi/2$ radians) angle
         between slanted segments of a bounding curve. If $X$ is
         $90^{\circ}$-thick at $p$ then $q \in Int(X)$. 
         (Not meant to be understood as showing all of $X$).}
        \label{fig:degrees90a}
    \end{figure}

      \begin{figure}
        \centering
        \includegraphics[height=1in]{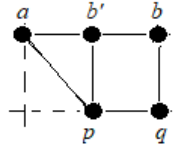}
        \caption{\cite{BxConvex} $\angle apq$ is an angle of
        135$^ \circ$ degrees ($3 \pi /4$ radians)
        of a bounding curve of $X$ at $p$, with
        $\overline{ap} \cup \overline{pq}$
            a subset of the bounding curve. If
            $X$ is $135^{\circ}$-thick at $p$ then $b,b' \in X$. (Not meant to
        be understood as showing all of $X$.)
        }
        \label{fig:degrees135c1}
        \end{figure}

\subsection{Convexity}
A set $X$ in a Euclidean space $\R^n$ is
{\em convex} if for every pair of distinct
points $x,y \in X$, the line segment
$\overline{xy}$ from $x$ to $y$ is contained in $X$.
The {\em convex hull of} $Y \subset \R^n$,
denoted $hull(Y)$, is the
smallest convex subset of $\R^n$ that contains~$Y$.
If $Y \subset \R^2$ is a finite set, then
$hull(Y)$ is a single point if $Y$ is a singleton;
a line segment if $Y$ has at least 2 members and all are
collinear; otherwise, $hull(Y)$ is a polygonal disk,
and the endpoints of the edges of $hull(Y)$ are its {\em vertices}.

A digital version of convexity can be stated
for subsets of the digital plane~$\Z^2$ as follows.
A finite set $Y \subset \Z^2$ is 
{\em (digitally) convex} {\rm \cite{BxConvex}} if either
\begin{itemize}
    \item $Y$ is a single point, or
    \item $Y$ is a digital line segment, or
    \item $Y$ is a digital disk with a bounding curve $S$
          such that the endpoints of the maximal line segments
          of~$S$ are the vertices of $hull(Y) \subset \R^2$.
\end{itemize}

\begin{remk}
{\rm \cite{BxConvex}}
Let $(X,\kappa)$ be a digital disk in $\Z^2$, 
$\kappa \in \{c_1,c_2\}$. Let $s_1$ and $s_2$ be sides of
$X$ such that $s_1 \cap s_2 = \{p\} \subset X$. Then
the interior angle of $X$ at $p$ is well defined.
\end{remk}

\begin{remk}
\label{convDiskIntAngles}
{\rm \cite{BxConvex}}
It follows from Remark~\ref{segSlope} that
every interior angle measures as a multiple
of 45$^\circ$ ($\pi/4$ radians). For a
convex disk, an interior angle must be
45$^\circ$ ($\pi/4$ radians),
90$^\circ$ ($\pi/2$ radians), or
135$^\circ$ ($3\pi/4$ radians).
\end{remk}

\section{Tools for determining fixed point sets}
The following assertions will be useful in determining fixed point and freezing
sets.

\begin{prop}
{\rm (Corollary 8.4 of~\cite{bs19a})}
\label{uniqueShortestProp}
Let $(X,\kappa)$ be a digital image and
$f \in C(X,\kappa)$. Suppose
$x,x' \in \Fix(f)$ are such that
there is a unique shortest
$\kappa$-path $P$ in~$X$ from $x$ 
to $x'$. Then $P \subseteq \Fix(f)$.
\end{prop}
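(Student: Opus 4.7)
The plan is to combine the $\kappa$-continuity of $f$ with the uniqueness of the shortest path from $x$ to $x'$. First I would name the path: write $P = \{p_0, p_1, \ldots, p_n\}$ with $p_0 = x$, $p_n = x'$, and $n = d_{\kappa}(x,x')$. The edge conditions $p_i \adjeq_{\kappa} p_{i+1}$ for $0 \le i < n$ come for free from $P$ being a $\kappa$-path.

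Next I would push $P$ forward by $f$ and form the image sequence $f(P) = \{f(p_0), f(p_1), \ldots, f(p_n)\}$. By the characterization of continuity stated earlier in the excerpt, each $p_i \adjeq_{\kappa} p_{i+1}$ forces $f(p_i) \adjeq_{\kappa} f(p_{i+1})$, so $f(P)$ is itself a $\kappa$-path in $X$ of length $n$. Using $x, x' \in \Fix(f)$ we have $f(p_0) = x$ and $f(p_n) = x'$, so $f(P)$ is a $\kappa$-path from $x$ to $x'$ of length exactly $n = d_{\kappa}(x,x')$. Hence $f(P)$ is a shortest $\kappa$-path from $x$ to $x'$; by the uniqueness hypothesis, $f(P) = P$ as indexed sequences, so $f(p_i) = p_i$ for $0 \le i \le n$, and $P \subseteq \Fix(f)$.

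I do not anticipate a genuine obstacle: the only point that deserves a line of comment, rather than a separate argument, is that a path in this paper is the indexed sequence $\{x_i\}_{i=0}^k$ together with its length, so ``unique shortest path'' means unique as an indexed sequence. Reading the hypothesis this way is precisely what converts the equality $f(P) = P$ into the coordinatewise equalities $f(p_i) = p_i$, rather than merely equality of underlying sets; aside from that bookkeeping the proof is immediate and requires no case analysis on whether consecutive $f(p_i)$ might coincide.
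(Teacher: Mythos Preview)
The paper does not supply its own proof of this proposition; it is quoted as Corollary~8.4 of~\cite{bs19a} and used as a tool without argument. Your proof is correct and is exactly the standard argument one would expect for this result: push the path forward by $f$, observe by continuity that the image is again a $\kappa$-path of the same length between the same fixed endpoints, and invoke uniqueness. Your remark about reading ``unique shortest path'' as uniqueness of the indexed sequence is the right bookkeeping observation; even under the weaker reading (uniqueness of the underlying point set), the conclusion still follows, since in any shortest path $d_\kappa(p_0,p_i)=i$, so the index of each point is determined by its distance from $p_0=x$, and hence $f(p_i)=p_i$ is forced once the image set agrees with $P$.
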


Lemma~\ref{c1pulling}, below,
\begin{quote}
$\ldots$ can
be interpreted to say that
in a $c_u$-adjacency,
a continuous function that
moves a point~$p$ also moves
a point that is ``behind"
$p$. E.g., in $\Z^2$, if $q$ and $q'$ are
$c_1$- or $c_2$-adjacent with $q$
left, right, above, or below $q'$, and a
continuous function $f$ moves $q$ to the left,
right, higher, or lower, respectively, then
$f$ also moves $q'$ to the left,
right, higher, or lower, respectively~\cite{BxFpSets}.
\end{quote}

\begin{lem}
\label{c1pulling}
{\rm ~\cite{BxFpSets}}
Let $(X,c_u)\subset \Z^n$ be a digital image, 
$1 \le u \le n$. Let $q, q' \in X$ be such that
$q \adj_{c_u} q'$.
Let $f \in C(X,c_u)$.
\begin{enumerate}
    \item If $p_i(f(q)) > p_i(q) > p_i(q')$
          then $p_i(f(q')) > p_i(q')$.
    \item If $p_i(f(q)) < p_i(q) < p_i(q')$
          then $p_i(f(q')) < p_i(q')$.
\end{enumerate}
\end{lem}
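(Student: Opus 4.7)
The plan is to reduce the statement to two elementary facts: the $c_u$-adjacency constrains coordinate differences by at most $1$, and continuity carries adjacency to adjacency-or-equality. I will prove (1) directly, then observe (2) follows by the obvious sign-reversal symmetry.

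First I would unpack the hypothesis. Since $q \adj_{c_u} q'$, the definition of $c_u$-adjacency forces $|p_j(q) - p_j(q')| \le 1$ for every index $j$, and in particular for the index $i$ in the statement. Combined with $p_i(q) > p_i(q')$, this pins down $p_i(q) = p_i(q') + 1$. Next, by the characterization of continuity stated earlier in the excerpt ($x \adj_{c_u} y$ implies $f(x) \adjeq_{c_u} f(y)$), we have $f(q) \adjeq_{c_u} f(q')$, and hence $|p_i(f(q)) - p_i(f(q'))| \le 1$ regardless of whether $f(q) = f(q')$ or $f(q) \adj_{c_u} f(q')$.

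Now I chain the inequalities. From $p_i(f(q)) > p_i(q)$ and the fact that these are integers, $p_i(f(q)) \ge p_i(q) + 1 = p_i(q') + 2$. Combining with the one-step bound from continuity,
\[
 p_i(f(q')) \ge p_i(f(q)) - 1 \ge p_i(q') + 1 > p_i(q'),
\]
which is the desired conclusion of~(1). For~(2), the argument is identical up to sign: $p_i(q) = p_i(q') - 1$, $p_i(f(q)) \le p_i(q) - 1 = p_i(q') - 2$, and $p_i(f(q')) \le p_i(f(q)) + 1 \le p_i(q') - 1 < p_i(q')$. Alternatively, one can apply~(1) to the map $f$ and the pair $(q,q')$ with the $i$-th coordinate negated; either route is immediate.

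There is no real obstacle here beyond keeping the direction of each inequality straight. The only subtle point worth flagging explicitly is that one must use $\adjeq$ rather than $\adj$ in the continuity step, since $f$ is allowed to identify $q$ and $q'$; the bound $|p_i(f(q)) - p_i(f(q'))| \le 1$ holds in either case, so the argument goes through without needing to split on whether $f(q) = f(q')$.
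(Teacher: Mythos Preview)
Your proof is correct. Note, however, that the present paper does not supply its own proof of this lemma: it is quoted from~\cite{BxFpSets} and used here as a tool. Your argument is exactly the short coordinate-chase one would expect---adjacency bounds each coordinate difference by~$1$, continuity preserves adjacency-or-equality, and the strict integer inequality $p_i(f(q)) > p_i(q)$ forces $p_i(f(q)) \ge p_i(q')+2$---so there is nothing to compare against in this paper, and your write-up stands on its own.
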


\begin{remk}
\label{boundingCurveSetFreezes}
{\rm \cite{BxFpSets}}
If $X \subset \Z^2$ is finite, then
a set of bounding curves for $X$ is a freezing set
for $(X,c_i)$, $i \in \{1,2\}$.
\end{remk}

In particular, we have:

\begin{thm}
\label{bdCurveFreezes}
Let $D$ be a digital disk in $\Z^2$. Let
$S$ be a bounding curve for $D$. Then $S$ is
a freezing set for $(D,c_1)$ and for $(D,c_2)$.
\end{thm}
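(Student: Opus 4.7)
The plan is to reduce this to Remark~\ref{boundingCurveSetFreezes} by verifying that the singleton $\{S\}$ qualifies as a set of bounding curves for $D$ in the sense of Definition~\ref{boundingCurvesDef}.

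First I would recall from Definition~\ref{diskDef} that since $D$ is a disk with bounding curve $S$, we have $D = S \cup Int(S)$, that $S$ is a $c_2$-closed curve, and that $\Z^2 \setminus S$ decomposes into exactly two $c_1$-components, $Int(S)$ (finite) and $Ext(S)$ (infinite). In particular $\Z^2 \setminus D = Ext(S)$.

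Next I would check the four bullets of Definition~\ref{boundingCurvesDef} with $n=1$ and $S_1 := S$. The pairwise-disjointness requirement is vacuous. The first bullet, $D \subset S_1 \cup Int(S_1)$, holds with equality by the above. The second and third bullets are vacuous because there is no index $j>1$. The fourth bullet, $\Z^2 \setminus D = Ext(S_1) \cup \bigcup_{j=2}^{1} Int(S_j) = Ext(S_1)$, is exactly the decomposition recalled above. Hence $\{S\}$ is a set of bounding curves for $D$.

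Finally, applying Remark~\ref{boundingCurveSetFreezes} to the finite set $D \subset \Z^2$ and its set of bounding curves $\{S\}$, we conclude that $S$ is a freezing set for $(D,c_i)$ for $i \in \{1,2\}$. I don't anticipate any real obstacle here; the only conceptual point to flag is that the hypothesis of Definition~\ref{diskDef} already bundles together everything one needs to check the single-curve case of Definition~\ref{boundingCurvesDef}, so Theorem~\ref{bdCurveFreezes} is genuinely a specialization of Remark~\ref{boundingCurveSetFreezes}.
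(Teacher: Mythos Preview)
Your proposal is correct and matches the paper's own approach: the paper's (commented-out) proof simply states ``This follows from Remark~\ref{boundingCurveSetFreezes},'' and your argument is a fleshed-out version of exactly that reduction, verifying that the single curve $\{S\}$ satisfies Definition~\ref{boundingCurvesDef} with $n=1$.
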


The next two results form a dual pair.

\begin{thm}
\label{convDiskThmActual}
{\rm \cite{BxConvex}}
Let $X$ be a thick convex disk with a
    bounding curve $S$.
    Let $A_1$ be the set of points $x \in S$ such that
$x$ is an endpoint of a maximal axis-parallel edge of $S$. Let $A_2$ 
be the union of slanted line segments in $S$.
Then $A = A_1 \cup A_2$ is a minimal 
freezing set for $(X,c_1)$.
\end{thm}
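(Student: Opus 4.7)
The plan is to establish two things: that $A$ is a freezing set for $(X,c_1)$, and that no proper subset of $A$ is a freezing set for $(X,c_1)$.

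For the freezing part, I would show that any $f \in C(X,c_1)$ with $A \subseteq \Fix(f)$ forces $S \subseteq \Fix(f)$ and then invoke Theorem~\ref{bdCurveFreezes}. Each maximal slanted segment of $S$ is contained in $A_2 \subseteq \Fix(f)$, so is pointwise fixed by hypothesis. For each maximal axis-parallel segment, its two endpoints lie in $A_1 \subseteq \Fix(f)$, and since the segment is horizontal or vertical it realizes the unique shortest $c_1$-path between its endpoints; Proposition~\ref{uniqueShortestProp} then places the whole segment in $\Fix(f)$. Taking the union over all maximal segments of $S$ yields $S \subseteq \Fix(f)$.

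For minimality, given $a \in A$ I would construct a non-identity $g \in C(X,c_1)$ satisfying $g|_{A \setminus \{a\}} = \id$. The template is to set $g(x) = x$ for $x \neq a$ and $g(a) = a'$ for a carefully chosen $a' \neq a$; such a $g$ is $c_1$-continuous exactly when $a' \adjeq_{c_1} y$ for every $y \in X$ with $y \adj_{c_1} a$. By Remark~\ref{convDiskIntAngles} the interior angle of $S$ at $a$ is $45^\circ$, $90^\circ$, or $135^\circ$, and I would split into cases accordingly: if $a$ is the vertex of a $90^\circ$ interior angle with axis-parallel (respectively slanted) sides, take $a'$ to be the interior diagonal point supplied by the $90^\circ$-thickness condition in the axis-parallel (respectively slanted) case; if $a$ is the vertex of a $135^\circ$ interior angle, take $a' = b$ supplied by the $135^\circ$-thickness condition; and if $a$ lies in the interior of a maximal slanted segment of $S$, take $a' = c$ supplied by slant-thickness. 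A $45^\circ$ corner (if any) can be handled directly by taking $a'$ to be the first interior-side point of the slanted segment meeting $a$, which itself lies in $A_2 \setminus \{a\}$ and is fixed.

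The main obstacle will be verifying the continuity condition $a' \adjeq_{c_1} y$ for every $c_1$-neighbor $y$ of $a$ in $X$ in each case; this amounts to pinning down which of the four candidate $c_1$-neighbors of $a$ in $\Z^2$ actually lie in $X$. Here convexity of $X$ does the bookkeeping, forcing those neighbors to lie on the interior side of $S$ at $a$, so that each of them is within a single $c_1$-step of the diagonal or interior point $a'$ produced by the relevant thickness condition.
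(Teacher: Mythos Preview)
This theorem is quoted from \cite{BxConvex} and is not proved in the present paper, so there is no ``paper's own proof'' to compare against directly.  That said, your plan is sound, and it dovetails nicely with arguments the current paper does carry out.

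Your freezing argument is clean: the horizontal or vertical segment between two lattice points on a common axis-parallel line is the unique shortest $c_1$-path in $\Z^2$, hence in $X$, so Proposition~\ref{uniqueShortestProp} applies to each maximal axis-parallel edge of $S$, and together with $A_2\subseteq\Fix(f)$ you recover $S\subseteq\Fix(f)$ and finish via Theorem~\ref{bdCurveFreezes}.

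For minimality, note that the very constructions you sketch are worked out in this paper, albeit for the (stronger) purpose of showing that the relevant points must belong to every \emph{cold} set: Proposition~\ref{essentialCold45} for the $45^\circ$ vertex, Proposition~\ref{90c1} for the $90^\circ$ axis-parallel vertex, Proposition~\ref{essentialc2cold90} for the $90^\circ$ slanted vertex, Proposition~\ref{essentialc1cold135} for the $135^\circ$ vertex, and Proposition~\ref{slantIn-c1} for interior points of slanted edges.  Each of those proofs exhibits a non-identity $c_1$-continuous self-map fixing $X\setminus\{a\}$, which is exactly what you need.  Two small refinements to your wording: in the $90^\circ$ slanted case the thickness point $q$ is $c_1$-adjacent to $a$ (not diagonal), so moving $a$ to $q$ already gives a non-identity map; and in the $45^\circ$ case your $a'$ is the next point along the slanted edge (so in $A_2$), not an interior point---but this is harmless since $g$ is the identity off $a$.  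The continuity verification in each case reduces, as you say, to checking that convexity forces all $c_1$-neighbors of $a$ in $X$ to lie on the interior side of $S$, where they are automatically within one $c_1$-step of the chosen $a'$; the cited propositions confirm this in each configuration.
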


\begin{thm}
\label{convDiskThmC2Actual}
{\rm \cite{BxConvex}}
Let $X$ be a thick convex disk with a minimal bounding
curve $S$. Let $B_1$ be the set of
points $x \in S$ such that
$x$ is an endpoint of a maximal
slanted edge in $S$. Let $B_2$ 
be the union of maximal axis-parallel line segments in $S$.
Let $B = B_1 \cup B_2$. Then $B$ is a 
minimal freezing set for $(X,c_2)$.
\end{thm}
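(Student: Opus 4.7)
The plan is to establish the theorem in two parts: that $B$ is a freezing set for $(X, c_2)$, and that $B$ is minimal. The freezing portion reduces quickly to Theorem~\ref{bdCurveFreezes} once I force $\Fix(f)$ to contain all of $S$; the minimality portion requires producing, for each $b \in B$, an explicit non-identity $c_2$-continuous self-map of $X$ fixing $B \setminus \{b\}$.

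For the freezing half, suppose $f \in C(X, c_2)$ with $f|_B = \id_B$; I aim to prove $S \subseteq \Fix(f)$, after which Theorem~\ref{bdCurveFreezes} yields $f = \id_X$. The axis-parallel portion of $S$ equals $B_2 \subseteq B$, hence is already fixed. For each maximal slanted segment $\sigma \subseteq S$ with endpoints $p = (p_1, p_2)$ and $q = (q_1, q_2)$ (both in $B_1 \subseteq \Fix(f)$), write $n = |p_1 - q_1| = |p_2 - q_2|$, the common value forced by $\sigma$ being slanted. Since a $c_2$-step changes each coordinate by at most $1$, every $c_2$-path from $p$ to $q$ has length at least $n$; a length-$n$ path is then forced to change both coordinates by the fixed sign of $q_i - p_i$ at every step, so the path coincides with $\sigma$. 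Hence $\sigma$ is the unique shortest $c_2$-path in $X$ from $p$ to $q$, and Proposition~\ref{uniqueShortestProp} gives $\sigma \subseteq \Fix(f)$. Combined over all maximal slanted segments, this yields $S \subseteq \Fix(f)$.

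For minimality, for each $b \in B$ I construct $f_b \in C(X, c_2)$ with $f_b|_{B \setminus \{b\}} = \id$ and $f_b(b) \neq b$. The construction template is to set $f_b(b) = q_b$ for a suitable $q_b \in X$ close to $b$ and $f_b(x) = x$ for $x \neq b$; continuity then reduces to the local condition that every $c_2$-neighbor $x \in X$ of $b$ satisfies $x \adjeq_{c_2} q_b$. The cases split by the local type of $b$ on the bounding curve:
\begin{itemize}
    \item $b$ interior to a maximal axis-parallel segment of $S$, where $q_b$ can be taken as the interior point $c_1$-adjacent to $b$ along the inward normal;
    \item $b$ a vertex of $S$ where an axis-parallel side meets another side, where the $90^\circ$-thick or $135^\circ$-thick clause of Definition~\ref{thickness} produces a suitable interior $q_b$;
    \item $b \in B_1 \setminus B_2$, so both sides at $b$ are slanted and therefore (as slopes $\pm 1$ are the only options) meet at a $90^\circ$ interior angle; here the $90^\circ$-thick-with-slanted-sides clause provides a $c_1$-neighbor $q_b \in Int(X)$ of $b$.
\end{itemize}
In each case, convexity confines the $c_2$-neighbors of $b$ in $X$ to the inward wedge at $b$, and the thickness-produced $q_b$ is within $c_2$-distance $1$ of all such neighbors, so $f_b$ is continuous.

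The main obstacle is the case analysis for minimality: each local configuration of sides meeting at $b$ (axis-parallel interior, the various interior-angle vertices consistent with Remark~\ref{convDiskIntAngles}, and the two-slanted-sides vertex) demands its own choice of $q_b$ and its own local continuity check, with the correct thickness clause invoked in each. The freezing direction, by contrast, becomes essentially a one-line consequence of the uniqueness of $c_2$-shortest paths along a slanted diagonal.
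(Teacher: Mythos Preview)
This theorem is quoted from~\cite{BxConvex} and is not proved in the present paper, so there is no in-paper proof to compare your proposal against. I can only comment on your argument on its own terms.

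Your freezing half is sound. The key observation---that a slanted segment is the \emph{unique} shortest $c_2$-path in $X$ between its endpoints, because any $c_2$-path of length $n=|p_1-q_1|=|p_2-q_2|$ is forced to step diagonally in the fixed direction at every stage---is correct, and Proposition~\ref{uniqueShortestProp} then pins the slanted edges. Together with $B_2\subset\Fix(f)$ this gives $S\subset\Fix(f)$, and Theorem~\ref{bdCurveFreezes} finishes.

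Your minimality sketch has a gap in the case split. When $b\in B_2$ is a vertex at which an axis-parallel side meets a slanted side, the interior angle can be $45^\circ$ as well as $135^\circ$ (Remark~\ref{convDiskIntAngles}); your second bullet invokes only the $90^\circ$- and $135^\circ$-thick clauses of Definition~\ref{thickness}, and there is no thickness clause for $45^\circ$ vertices to appeal to. The repair is simple: at a $45^\circ$ vertex the only $c_2$-neighbors of $b$ in $X$ are the two adjacent boundary points on the incident sides, and these are $c_1$-adjacent to each other, so taking $q_b$ equal to either of them gives the required non-identity map (compare the construction in Proposition~\ref{essentialc2cold45}). You should also justify, in the ``interior of an axis-parallel edge'' case, that the inward $c_1$-neighbor actually lies in $X$; this follows from convexity of the disk rather than from any thickness hypothesis.
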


\section{General result}
\label{generalResultSec}
\begin{thm}
\label{1nbr}
Let $A$ be a cold set for the connected digital image $(X,\kappa)$.
Assume $\#X > 2$. 
Let $p \in X$  such that 
$\#N(X,p,\kappa) = 1$. 
Then $p \in A$.
\end{thm}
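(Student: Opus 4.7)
The plan is to argue by contradiction. Suppose $p \notin A$, with $N(X,p,\kappa) = \{q\}$. The goal is to exhibit a continuous self-map $f$ of $X$ that fixes $A$ pointwise but moves $p$ a $\kappa$-distance of $2$, contradicting the hypothesis that $A$ is a (1-)cold set.

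First I would locate a convenient target for $p$. Since $\#X > 2$, there is some $r_0 \in X \setminus \{p,q\}$. Using connectedness of $X$, take a shortest $\kappa$-path in $X$ from $r_0$ to $q$; this path cannot pass through $p$ (as $q$ is the only neighbor of $p$, any visit to $p$ would force the path to leave and immediately return via $q$, contradicting minimality). So its penultimate vertex is some $r \in X \setminus \{p,q\}$ with $r \adj_{\kappa} q$. In particular $r \neq p$ and $r \not\adj_{\kappa} p$.

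Next I would define $f : X \to X$ by $f(p) = r$ and $f(x) = x$ for $x \neq p$. To check $\kappa$-continuity, by the adjacency criterion it suffices to inspect adjacent pairs $x \adj_{\kappa} y$ in $X$. If neither is $p$, then $f(x)=x \adj_{\kappa} y=f(y)$. The only pair involving $p$ is $\{p,q\}$ (since $N(X,p,\kappa) = \{q\}$), and there $f(p) = r \adj_{\kappa} q = f(q)$, so $f(p) \adjeq_{\kappa} f(q)$. Hence $f \in C(X,\kappa)$. Also, since $p \notin A$, $f|_A = \id_A$.

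Finally I would observe that $d_{\kappa}(p, f(p)) = d_{\kappa}(p, r) \geq 2$: indeed $r \neq p$ and $r \not\adj_{\kappa} p$, so no $\kappa$-path from $p$ to $r$ has length $\leq 1$. This contradicts Definition~\ref{s-cold-def} (with $s=1$) for the cold set $A$, so the assumption $p \notin A$ must fail. The only step requiring genuine care is the existence of the vertex $r$, which is where the hypothesis $\#X > 2$ (together with connectedness) is used; everything else is a short verification.
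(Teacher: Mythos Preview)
Your proof is correct and follows essentially the same approach as the paper's: find a neighbor $r$ of $p$'s unique neighbor with $r\neq p$, send $p$ to $r$ while fixing everything else, and observe that this continuous map fixes $A$ but moves $p$ a $\kappa$-distance of $2$. The paper justifies the existence of such an $r$ more tersely (if the unique neighbor of $p$ had no other neighbor, $\{p,q\}$ would be a component, contradicting connectedness with $\#X>2$), whereas you route through a shortest-path argument; both are fine and yield the same point.
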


\begin{proof}
By hypothesis, there exists 
$p' \in X$ such that $\{p'\} = N(X,p,\kappa)$.
Since $X$ is connected and has more than 2 points,
there exists $q \in N(X,\kappa,p') \setminus \{p\}$.
Suppose $p \not \in A$. Then the function
$f: X \to X$ given by
\[ f(x) = \left \{ \begin{array}{ll}
    q &  \mbox{if } x = p; \\
    x &   \mbox{if } x \neq p,
\end{array}  \right .
\]
is a member of $C(X,\kappa)$; this follows from the observation that
\[ x \adj p ~~ \Rightarrow ~~ x = p' ~~ \Rightarrow ~~ f(x) = f(p') = p' \adj q = f(p).
\]
Clearly $f|_A = \id_A$ and 
$d_{\kappa}(p,f(p)) = d_{\kappa}(p,q) = 2$. The latter contradicts
the assumption that $A$ is a cold set. The assertion follows.
\end{proof}

\section{Results for vertices of boundary angles}
\label{essentialPtSec}
In this section, we state results concerning whether the
vertex of an interior angle formed 
by sides of a bounding curve must belong to a cold set.

\subsection{$45^{\circ}$ ($\pi / 4$ radians)}
\begin{prop}
\label{essentialCold45}
Let $X \subset \Z^2$. Let $S$ be a member of a set of 
minimal bounding curves for $X$. Let $a \in S$ be the vertex of an interior angle, with measure $45^{\circ}$
($\pi /4$ radians), formed by edges $E_1$ and $E_2$ of $S$.
Let $A$ be a freezing set or a cold set for $(X, c_1)$.
Then $a \in A$.
\end{prop}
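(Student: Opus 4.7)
The plan is to reduce the claim to Theorem~\ref{1nbr}. By items~1 and~5 of Remark~\ref{coldRemark}, every freezing set is a $0$-cold set and hence a $1$-cold (cold) set, so it suffices to prove the statement when $A$ is a cold set.

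The main step is to check that $\#N(X,a,c_1)=1$. By Remark~\ref{segSlope}, the slopes of $E_1$ and $E_2$ lie in $\{0,\infty,+1,-1\}$. The angles between pairs of such directions are multiples of $45^\circ$, and a $45^\circ$ interior angle can occur only when one of $E_1,E_2$ is axis-parallel and the other slanted. Applying an isomorphism of $(\Z^2,c_1)$ generated by $90^\circ$ rotations and reflections (which carries cold sets to cold sets by Theorem~\ref{s-cold-invariant}), I normalize to $a=(0,0)$, $E_1\subset\{(t,0):t\ge 0\}$, $E_2\subset\{(t,t):t\ge 0\}$. The interior ($X$-side) wedge at $a$ is then $\{(x,y)\in\Z^2 : 0<y<x\}$ locally. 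Inspecting the four $c_1$-neighbors of $a$: the point $(1,0)$ lies on $E_1\subset S\subset X$; the points $(-1,0)$ and $(0,-1)$ lie in the $315^\circ$ wedge complementary to the interior of $X$ at $a$, hence in $Ext(S)$ and outside $X$ by Definition~\ref{boundingCurvesDef}; and $(0,1)$ is neither in the interior wedge nor on $E_1\cup E_2$, and so also fails to lie in $X$. Thus $N(X,a,c_1)=\{(1,0)\}$.

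The hypothesis $\#X>2$ of Theorem~\ref{1nbr} is immediate, since $E_1$ and $E_2$ each contain a point distinct from $a$ (for instance $(1,0)$ on $E_1$ and $(1,1)$ on $E_2$), giving $\#X\ge 3$. Theorem~\ref{1nbr} then yields $a\in A$, as desired.

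The main obstacle I anticipate is the clean justification that $(0,1)\notin X$: a priori, some other portion of $S$ or of another bounding curve of $X$ could pass through $(0,1)$. Ruling this out uses the minimality of $S$ together with the fact that $E_1$ and $E_2$ are the \emph{maximal} segments of $S$ meeting at $a$, so no additional edge of any bounding curve can emanate from $a$ toward $(0,1)$; the disjointness conditions of Definition~\ref{boundingCurvesDef} then forbid a separate bounding curve from reaching $(0,1)$ while $(0,1)$ remains in $X$. Once this local verification is written out, the rest of the argument is a direct appeal to Theorem~\ref{1nbr}.
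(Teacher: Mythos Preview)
Your argument is correct and, at bottom, the same as the paper's: both hinge on the observation that after normalizing the $45^\circ$ corner to $a=(0,0)$ with edges along $\{(t,0):t\ge0\}$ and $\{(t,t):t\ge0\}$, the point $a$ has essentially only $(1,0)$ as a $c_1$-neighbor in $X$, so one can push $a$ two $c_1$-steps into $X$ while fixing everything else. You package this via Theorem~\ref{1nbr}; the paper instead writes down the explicit map $f(a)=(1,1)$, $f|_{X\setminus\{a\}}=\id$, and checks continuity directly.

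Two small remarks. First, your anticipated obstacle about $(0,1)$ is not a special case at all: since $1>0$ we have $y>x$ there, so $(0,1)$ sits in the same $315^\circ$ exterior wedge as $(-1,0)$ and $(0,-1)$, and the identical local argument excludes it from $X$; no appeal to minimality or to Definition~\ref{boundingCurvesDef}'s disjointness clauses is needed. Second, the paper's explicit choice $f(a)=(1,1)$ is marginally more robust than the route through Theorem~\ref{1nbr}: because both $(1,0)$ and $(0,1)$ are $c_1$-adjacent to $(1,1)$, the paper's $f$ would remain $c_1$-continuous even if $(0,1)$ happened to lie in $X$, whereas your invocation of Theorem~\ref{1nbr} genuinely requires $\#N(X,a,c_1)=1$. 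In the present setting this distinction is moot, but it explains why the paper does not pause over $(0,1)$.
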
 

\begin{proof}
By Theorem~\ref{s-cold-invariant}, there is no loss of
generality in assuming 
$a = (0, 0)$, the points $(x, y)$ of $E_1$ satisfy 
$y = x \ge 0$, and the points of $E_2$ satisfy $x \ge 0 = y$.
The function $f: X \to X$ given by
\[ f(x) = \left \{ \begin{array}{ll}
   (1, 1) & \mbox{if } x = a; \\
    x & \mbox{if } x \neq a,
 \end{array} \right .
\]
is easily seen (see Figure~\ref{fig:innerBdPt}) to belong 
to $C(X, c_1)$. Further, if
$a \not \in A$ then $f|_A = id_A$ and $d_{c_1}(a, f(a)) = 2$,
the latter contrary to assumption if
$A$ is either a freezing set or a cold set for $(X, c_1)$.
The assertion follows.
\end{proof} 

\begin{prop}
\label{essentialc2cold45}
Let $X \subset \Z^2$. Let $S$ be a member of a set of minimal bounding curves for $X$.
Let $a \in S$ be the vertex of an interior angle, with measure
$45^{\circ}$ ($\pi / 4$ radians), formed by edges $E_1$ and $E_2$ of $S$. Let $p \in E_1$, $p \adj_{c_2} a$.
Let $X$ be slant-thick at $p$. Let $A$ be
a freezing set or a cold set for $(X,c_2)$. Then $a \in A$.
\end{prop}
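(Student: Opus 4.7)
The plan is to adapt the contradiction argument of Proposition~\ref{essentialCold45} to the $c_2$ adjacency, with slant-thickness of $X$ at $p$ furnishing the ``interior-side'' target point to which $a$ can be moved by a $c_2$-continuous self-map. Assuming $a \notin A$, I will construct an $f \in C(X, c_2)$ with $f|_A = \id_A$ and $d_{c_2}(a, f(a)) = 2$, contradicting both the cold and the freezing hypothesis on $A$ simultaneously.

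First, I would use Theorem~\ref{s-cold-invariant} to normalize the configuration, placing $a = (0,0)$ with $E_1$ the slanted slope-$1$ edge whose points are $(k,k)$, $k \geq 0$, and $E_2$ the horizontal edge whose points are $(k,0)$, $k \geq 0$. Then $p \in E_1$, $p \adj_{c_2} a$ forces $p = (1,1)$, and the $45^{\circ}$ interior angle at $a$ confines $X$ locally near $a$ to the wedge $\{(x,y) : 0 \leq y \leq x\}$. Consequently, the only $c_2$-neighbors of $a$ that lie in $X$ are $(1,0) \in E_2$ and $p = (1,1) \in E_1$, since the remaining $c_2$-neighbors of $a$ all fall outside the wedge.

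Next, I would read off from slant-thickness at $p$ a suitable target $c \in X$. The $c_2$-but-not-$c_1$ neighbors of $p$ are $(0,0), (2,0), (0,2), (2,2)$; the wedge rules out $(0,2)$, while $(0,0) = a$ and $(2,2) \in E_1$ lie on $S$ and give only trivial witnesses. Interpreting the slant-thick hypothesis (as in Figure~\ref{fig:innerBdPt}) as providing a witness off the slanted segment $E_1$, this forces $c = (2,0) \in X$. I then define $f \colon X \to X$ by $f(a) = (2,0)$ and $f(x) = x$ for $x \neq a$. Continuity in $C(X, c_2)$ reduces to checking, at the single potentially bad point $a$, that $(1,0)$ and $(1,1)$ are each $c_2$-adjacent to $(2,0)$, which is immediate. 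Since $d_{c_2}(a, f(a)) = 2$, the supposition $a \notin A$ yields $f|_A = \id_A$ together with a point moved at $c_2$-distance exceeding $1$, contradicting both the freezing and the cold assumption.

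The main hurdle I expect is the proper reading of the slant-thick hypothesis: formally, the condition $c \adj_{c_2} p \not \adj_{c_1} c$ with $c \in X$ is satisfied by the trivial choice $c = a$, so I would need to lean on the geometric intent depicted in Figure~\ref{fig:innerBdPt} that slant-thickness supplies a witness not lying on the slanted segment itself. In the present $45^{\circ}$ configuration, once that interpretation is adopted, $(0,2)$ is excluded by the wedge and the remaining diagonal candidate is forced to be $(2,0)$, after which the construction and continuity check go through routinely.
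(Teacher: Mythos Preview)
Your proposal is correct and follows essentially the same route as the paper: normalize via Theorem~\ref{s-cold-invariant}, use slant-thickness at $p=(1,1)$ to obtain $c=(2,0)\in X$, and send $a$ to $c$ while fixing everything else. Your explicit identification of $N(X,a,c_2)=\{(1,0),(1,1)\}$ and the continuity check are more detailed than the paper's, and your observation that the formal slant-thickness condition is literally satisfied by $c=a$ (so one must read it, as in Figure~\ref{fig:innerBdPt}, as supplying the interior-side witness) is a fair point about the definition that the paper handles only by reference to the figure.
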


\begin{proof}
By Theorem~\ref{s-cold-invariant}, there is no loss of 
generality in assuming $a = (0,0)$, the points $(x,y)$ of $E_1$ satisfy 
$y=x \ge 0$, and the points of $E_2$ satisfy $x \ge 0 = y$.

Since $X$ is  slant-thick at $p$, $c=(2,0) \in X$ (see Figure~\ref{fig:innerBdPt}).
Consider the function $f: X \to X$ given by
\[ f(x) = \left \{ \begin{array}{ll}
     c &  \mbox{if } x=a; \\
     x & \mbox{if } x \neq a.
\end{array}
  \right .
\]  
It is easily seen that $f \in C(X,c_2)$. Also, we have that $f|_A = \id_A$, and  
$d_{c_2}(a, f(a))=2$, so assuming $a \not \in A$ is contrary to the assumption that 
$A$ is a freezing  or cold set. The assertion follows.
\end{proof}

\begin{figure}
    \centering
    \includegraphics[height=1.75in]{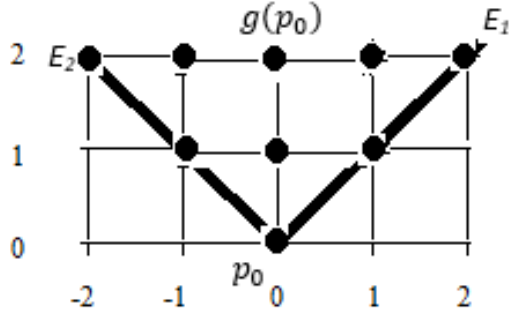}
    \caption{Illustration of the function $g$ of Proposition~\ref{essentialc2cold90}. All points
    of $X$ other than $p_0$ are fixed points of $f$. 
    Notice the point marked $p_0$
    is moved 2 units by $g$.\newline
    (Not to be understood as showing the entire image $X$.) The only $c_1$-neighbor of
         $p_0$ in $X$ is $(0,1)$, a fixed point of $g$ and a $c_1$-neighbor of $g(p_0)$,
         so $g \in C(X,c_1)$. \newline
         The points $(-1,1)$, $(0,1)$, and $(1.1)$ are the $c_2$-neighbors of $p_0$, are
         fixed points of $g$, and are $c_2$-neighbors of $g(p_0)$, so $g \in C(X,c_2)$.
         }
         \label{fig:degrees90openUp}
\end{figure}

\subsection{$90^{\circ}$ ($\pi / 2$ radians)}

\begin{prop}
\label{essentialc2cold90}
Let $X \subset \Z^2$. Let $S$ be a minimal bounding curve for $X$. 
Let $p_0$ be the vertex of an interior angle of $S$, formed by slanted edges $E_1$ and $E_2$ of $S$, 
of measure $90^{\circ}$ ($\pi / 2$ radians). Let $A$ be any of a freezing set for $(X,c_1)$, 
a cold set for $(X,c_1)$, a freezing set for $(X,c_2)$, or a cold set for $(X,c_2)$.
Let $X$ be $90^{\circ}$-thick at $p_0$. Then $p_0 \in A$.
\end{prop}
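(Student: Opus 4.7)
The plan is to follow the same template used in Propositions~\ref{essentialCold45} and~\ref{essentialc2cold45}: place $p_0$ in a canonical position via Theorem~\ref{s-cold-invariant}, exhibit a map $g \in C(X,c_1) \cap C(X,c_2)$ that fixes every point of $X$ except $p_0$ and displaces $p_0$ by distance~2, and then contradict both the freezing and cold hypotheses in a single stroke by assuming $p_0 \notin A$.

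Concretely, I would arrange coordinates so that $p_0 = (0,0)$, with $E_1$ lying along the ray $y = -x \ge 0$ and $E_2$ along the ray $y = x \ge 0$, so that the interior angle opens upward, as depicted in Figure~\ref{fig:degrees90openUp}. The $90^{\circ}$-thick hypothesis for slanted sides supplies a point $q \in Int(X)$ with $q \adj_{c_1} p_0$; given the geometry, the only viable candidate is $q = (0,1)$. Because $(0,1) \in Int(X)$, all its $c_1$-neighbors lie in $X$, and in particular $(0,2) \in X$. This little observation is the key piece of geometric input I would pin down before constructing $g$.

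With $(0,2) \in X$ secured, define
\[
 g(x) = \begin{cases} (0,2) & \text{if } x = p_0, \\ x & \text{otherwise.} \end{cases}
\]
Continuity must be checked only for pairs involving $p_0$, since $g$ is the identity elsewhere. The slanted edges force the ambient points $(\pm 1, 0)$, $(0,-1)$, $(\pm 1, -1)$ to lie in $Ext(S)$, so the only $c_1$-neighbor of $p_0$ in $X$ is $(0,1)$, and the $c_2$-neighbors of $p_0$ in $X$ are exactly $(-1,1)$, $(0,1)$, $(1,1)$. Each of these is $c_2$-adjacent (and in the case of $(0,1)$, also $c_1$-adjacent) to $g(p_0) = (0,2)$, confirming $g \in C(X,c_1) \cap C(X,c_2)$.

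To finish, suppose for contradiction that $p_0 \notin A$. Then $g|_A = \id_A$, yet $g \neq \id_X$ and $d_{\kappa}(p_0, g(p_0)) = 2$ for each $\kappa \in \{c_1, c_2\}$ (the two points differ by~2 in the $y$-coordinate, so there is no adjacency shortcut). This simultaneously contradicts $A$ being a freezing set (via $g \neq \id_X$) and $A$ being a cold set (via the distance exceeding~1), regardless of which of the four hypotheses on $A$ one assumes. The only delicate step is confirming that the ambient $c_2$-neighbors $(\pm 1, 0), (0,-1), (\pm 1,-1)$ all lie outside $X$; this is where the slanted-edges hypothesis is essential, and it is what allowed us to enumerate so few neighbors of $p_0$ to check in the continuity verification.
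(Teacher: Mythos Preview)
Your proposal is correct and follows essentially the same approach as the paper: normalize via Theorem~\ref{s-cold-invariant}, use $90^{\circ}$-thickness to locate a point of $X$ at $c_1$-distance~$2$ from $p_0$ along the angle's bisector, and define $g$ to send $p_0$ there while fixing everything else. Your orientation (angle opening upward, target $(0,2)$) matches Figure~\ref{fig:degrees90openUp} rather than the proof text's rightward orientation with target $(2,0)$, and you are slightly more explicit than the paper in deriving that the target point lies in $X$ from the fact that the thickness point $(0,1)$ belongs to $Int(X)$; otherwise the arguments are the same.
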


\begin{proof}
By Theorem~\ref{s-cold-invariant}, there is no loss of generality in assuming
$p_0=(0,0)$, points of $E_1$ satisfy $y = x \ge 0$, and
points of $E_2$ satisfy $y = -x \le 0$.

Since $X$ is $90^{\circ}$-thick at $p_0$, $q = (2,0) \in X$ (see Figure~\ref{fig:degrees90openUp}).
Suppose $p_0 \not \in A$. Consider the function $g: X \to X$ given by
\[ g(x) = \left \{ \begin{array}{ll}
     q &  \mbox{if } x=p_0; \\
     x & \mbox{if } x \neq p.
\end{array}
  \right .
\]  
It is easily seen that $g \in C(X,c_1)$ and
$g \in C(X,c_2)$. If $p \not \in A$ then $f|_A =\id_A$ and 
$d_{c_i}(p_0, g(p_0)) = 2$ for $i \in \{1,2\}$, contrary to 
the assumption that $A$ is a freezing or cold set. Therefore
we must have $p_0 \in A$.
\end{proof}

\begin{prop}
\label{90c1}
Let $X \subset \Z^2$. Let $S$ be a bounding curve for $X$.
Let $p$ be the vertex of an interior angle of $S$ formed by axis-parallel edges $E_1$ and $E_2$ of $S$,
of measure $90^{\circ}$ ($\pi / 2$ radians). Let $X$ be $90^{\circ}$-thick at $p$. Let $A$ be
a cold set for $(X,c_1)$. Then $p \in A$.
\end{prop}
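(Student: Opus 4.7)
The plan is to mirror the construction used in Proposition~\ref{essentialc2cold90}, producing a $c_1$-continuous self-map of $X$ that fixes every point other than $p$ while displacing $p$ by $c_1$-distance $2$. First I would invoke Theorem~\ref{s-cold-invariant} to normalize coordinates, taking $p = (0,0)$ with $E_1 \subset \{(x,0) : x \ge 0\}$ and $E_2 \subset \{(0,y) : y \ge 0\}$, so that $Int(X)$ lies locally on the first-quadrant side of $p$.

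Next, the axis-parallel $90^\circ$-thickness hypothesis supplies a point $q \in Int(X)$ with $q \adj_{c_2} p \not\adj_{c_1} q$; under the above normalization this pins down $q = (1,1)$. I would then define $f : X \to X$ by $f(p) = q$ and $f(x) = x$ for $x \neq p$. Assuming $p \notin A$, we have $f|_A = \id_A$ while $d_{c_1}(p, f(p)) = 2$, which contradicts $A$ being a cold (i.e., 1-cold) set for $(X, c_1)$. This would force $p \in A$.

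The main obstacle is verifying that $f \in C(X, c_1)$, which amounts to showing that every $c_1$-neighbor of $p$ in $X$ is $c_1$-adjacent or equal to $q = (1,1)$. The edge neighbors $(1,0) \in E_1$ and $(0,1) \in E_2$ are both $c_1$-adjacent to $(1,1)$, so they cause no trouble. The remaining candidate $c_1$-neighbors of $p$ in $\Z^2$ are $(-1,0)$ and $(0,-1)$, which sit on the exterior side of the convex $90^\circ$ angle at $p$. Since $p$ is a convex corner ($< 180^\circ$) of the bounding curve $S$, the curve locally separates these points from $Int(S)$, so neither belongs to $X$. This is the analog of the continuity check asserted in the proof of Proposition~\ref{essentialc2cold90} and illustrated in Figure~\ref{fig:degrees90openUp}; once secured, the contradiction that completes the proof is immediate.
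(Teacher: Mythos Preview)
Your proposal is correct and follows essentially the same argument as the paper: normalize via Theorem~\ref{s-cold-invariant}, use $90^{\circ}$-thickness to locate the diagonal interior point $q$, and send $p$ to $q$ while fixing everything else to obtain a $c_1$-continuous map that moves $p$ by $c_1$-distance $2$. The only cosmetic difference is the orientation of the normalization (the paper takes $E_2$ along the negative $y$-axis with $q=(1,-1)$, you take it along the positive $y$-axis with $q=(1,1)$), and you spell out the continuity check that the paper merely asserts as ``easily seen.''
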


\begin{proof}
By Theorem~\ref{s-cold-invariant}, we may assume $p=(0,0)$, points of $E_1$ satisfy
$x \ge 0$, $y=0$, and points of $E_2$ satisfy $x=0$, $y \le 0$.
Since $X$ is $90^{\circ}$-thick at $p$, $q=(1,-1) \in X$ (see Figure~\ref{fig:degrees90a}(1)).
Let $A$ be a cold set for $(X,c_1)$. Suppose $p \not \in A$.
Let $f: X \to X$ be the function given by
          \[ f(x) = \left \{ \begin{array}{ll}
             q  & \mbox{if }  x=p; \\
              x & \mbox{if }  x \neq p.
          \end{array}
                    \right .
          \]
          It is easily seen that $f \in C(X,c_1)$, $f|_A = \id_A$. However,
          $d_{c_1}(p,f(p)) = 2$, contrary to the assumption that $A$ is cold for $(X,c_1)$.
Therefore, we must have $p \in A$.
\end{proof}

We do not obtain a similar conclusion if $c_2$ is substituted for $c_1$ in the hypotheses of
Proposition~\ref{90c1}, as shown in the following example.

\begin{exl}
\label{90c2exl}
Let $X = [0,2]_{\Z}^2$. Then $p_0 = (0,0)$ is the vertex of an interior
angle of $90^{\circ}$ ($\pi /2$ radians) with axis-parallel sides, and $X$ is
$90^{\circ}$-thick at $p_0$,
but $p_0$ is not a member of every cold set for $(X,c_2)$.
\end{exl}

\begin{proof}
Let $A = X \setminus \{p_0\}$. Let $g \in C(X,c_2)$ such that $g|_A = \id_A$. 
Then continuity implies
\[ g(p_0)  \in N^*(X,(1,0),c_2) \cap N^*(X,(0,1),c_2) = \{p_0, (1,1) \}
   \subset N^*(X,p_0,c_2).
\]
Therefore, $A$ is a cold set for $(X,c_2)$.
\end{proof}

\subsection{$135^{\circ}$ ($3\pi / 4$ radians)}

\begin{prop}
\label{essentialc1cold135}
Let $X \subset \Z^2$ have a  $135^{\circ}$ ($3\pi / 4$ radians) interior angle at $p_0$.
Suppose $X$ is $135^{\circ}$-thick at $p_0$.
Then for every cold set $A$ for $(X,c_1)$, $p_0 \in A$.
\end{prop}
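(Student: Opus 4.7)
The plan is to mirror the strategy used in Propositions~\ref{essentialCold45} through~\ref{90c1}: normalize the coordinate system at $p_0$ by invoking Theorem~\ref{s-cold-invariant}, then construct a $c_1$-continuous self-map $f$ of $X$ that fixes every point other than $p_0$ and moves $p_0$ by $c_1$-distance $2$. If $p_0 \notin A$ then $f|_A = \id_A$ while $d_{c_1}(p_0, f(p_0)) = 2$, contradicting the hypothesis that $A$ is cold for $(X, c_1)$.

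For the normalization, I would place $p_0 = (0, 0)$ with the slanted edge $E_1$ of the $135^{\circ}$ interior angle containing $(1, 1)$ and the axis-parallel edge $E_2$ containing $(-1, 0)$, so that the interior of the angle is the upper $135^{\circ}$ wedge. The $135^{\circ}$-thickness hypothesis then furnishes two interior points of $X$ near $p_0$: the diagonal $c_2$-neighbor $b = (-1, 1)$ (which is not $c_1$-adjacent to $p_0$) and the $c_1$-neighbor $b' = (0, 1)$. I would define $f(p_0) = b$ and $f(x) = x$ for $x \neq p_0$, which makes $d_{c_1}(p_0, f(p_0)) = 2$ automatic.

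The key verification is the $c_1$-continuity of $f$. Since $f$ fixes every point other than $p_0$, this reduces to checking that each $c_1$-neighbor of $p_0$ in $X$ is $c_1$-adjacent or equal to $b = (-1, 1)$. The four candidate $c_1$-neighbors of $(0, 0)$ in $\Z^2$ are $(\pm 1, 0)$ and $(0, \pm 1)$: the points $(-1, 0) \in E_2$ and $(0, 1) = b'$ lie in $X$ and are both $c_1$-adjacent to $(-1, 1)$; the remaining candidates $(1, 0)$ and $(0, -1)$ lie outside the $135^{\circ}$ interior wedge at $p_0$, so they are not in $X$ locally and thus are not $c_1$-neighbors of $p_0$ in $X$.

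The main, and rather mild, obstacle is justifying that $(1, 0)$ and $(0, -1)$ are not in $X$. This is forced by the local geometry of the angle: the edges $E_1$ and $E_2$ meet at $p_0$ and bound the $135^{\circ}$ interior wedge on one side, and both $(1, 0)$ and $(0, -1)$ lie on the exterior side of those edges in a neighborhood of $p_0$. Once this point is in hand, the construction above yields the desired contradiction and hence $p_0 \in A$.
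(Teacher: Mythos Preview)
Your proof is correct and follows essentially the same approach as the paper's: normalize via Theorem~\ref{s-cold-invariant}, use the $135^{\circ}$-thickness to locate the diagonal interior point $b$, and send $p_0$ to $b$ while fixing everything else. The only difference is cosmetic---your normalization places the axis-parallel edge to the left and the slanted edge to the upper right, whereas the paper puts the axis-parallel edge to the right and the slanted edge to the upper left (so the paper's $b$ is $(1,1)$ rather than your $(-1,1)$); your version is a mirror image of the paper's and the continuity check is identical.
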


\begin{proof}
By Theorem~\ref{s-cold-invariant}, we may assume $p_0 = (0,0)$, points $(x,y) \in E_1$ satisfy
$x \ge 0 = y$, and points $(x,y) \in E_2$ satisfy $y = -x \ge 0$.

Suppose $p_0 \not \in A$. Since $X$ is $135^{\circ}$-thick at $p_0$, $b=(1,1) \in X$ (see Figure~\ref{fig:degrees135c1}).
The function $f: X \to X$ given by
\[ f(x) = \left \{ \begin{array}{cc}
    b & \mbox{if } x = p_0;  \\
    x & \mbox{if } x \neq p_0,
\end{array}    \right .
\]
is a member of $C(X,c_1)$, since $N(X, p_0, c_1) = \{(0,1), (1,0)\}$ and
\[ (0,1) = f(0,1) \adj_{c_1} f(p_0) \adj_{c_1} (1,0) = f(1,0).
\]
Also, $f|_A = \id_A$. However, $d_{c_1}(p_0, f(p_0)) = 2$, contrary to the
assumption that $A$ is cold. The contradiction yields the assertion.
\end{proof}

\begin{figure}
    \centering
    \includegraphics[height=1.5in]{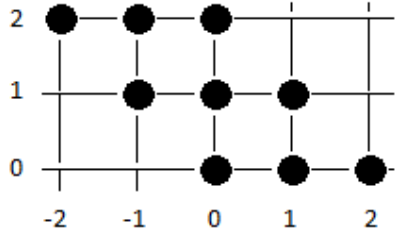}
    \caption{Interior angle of $135^{\circ}$ ($3 \pi / 4$ radians) at vertex $(0,0)$ in the
             image of Example~\ref{135c2Exl}.
    }
    \label{fig:degrees135a}
\end{figure}

If we replace $c_1$ with $c_2$ in Theorem~\ref{essentialc1cold135}, 
we do not obtain a similar conclusion, as shown in the following example.

\begin{exl}
\label{135c2Exl}
Let $X = ([0,2]_{\Z} \times \{0\}) \cup ([-1,1]_{\Z} \times \{1\}) \cup ([-2,0]_{\Z} \times \{2\})$
(see Figure~\ref{fig:degrees135a}). Then $(0,0)$ is the vertex
of an interior angle in $X$ measuring $135^{\circ}$ ($3 \pi /4$ radians),
$X$ is $135^{\circ}$-thick at $(0,0)$, and
$A = X \setminus \{(0,0)\}$ is a cold set for $(X,c_2)$.
\end{exl}

\begin{proof}
Let $f \in C(X,c_2)$ such that $f|_A = \id_A$. By continuity, we must have
\[ f(0,0) \in N^*(X, f(-1,1), c_2)  \cap N^*(X, f(1,0), c_2) =
\]
\[
   N^*(X, (-1,1), c_2)  \cap N^*(X, (1,0), c_2) = \{(0,0), (0,1) \} \subset N^*(X, (0,0), c_2).
\]
The assertion follows.
\end{proof}

However, we have the following.

\begin{prop}
\label{135c2Freeze}
Let $X$ be a digital disk in $\Z^2$ that is $135^{\circ}$-thick at 
$p$, where $p$ is the vertex of an interior angle of $X$
formed by edges $E_1$ and $E_2$ of a minimal 
bounding curve $S$ for $X$. Let $A$ be
a freezing set for $(X,c_2)$. Then $p \in A$.
\end{prop}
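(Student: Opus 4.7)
The approach is to prove the contrapositive: I will show that $X \setminus \{p\}$ fails to be a freezing set for $(X, c_2)$ by exhibiting a non-identity map $f \in C(X, c_2)$ that fixes every point of $X \setminus \{p\}$. Since freezing sets are exactly $0$-cold sets by Remark~\ref{coldRemark}(1), Theorem~\ref{s-cold-invariant} applies to them, so there is no loss of generality in normalizing coordinates so that $p = (0,0)$, with $E_1$ lying along the positive $x$-axis and $E_2$ lying along the ray $\{(-t,t) \mid t \ge 0\}$. In this frame the interior of the $135^\circ$ angle is the open wedge of directions strictly between $0^\circ$ and $135^\circ$ (counterclockwise from $E_1$ to $E_2$).

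The next step is to identify $N(X, p, c_2)$. Since $S$ is a bounding curve of the disk $X$ and $p$ is the vertex of the specified $135^\circ$ interior angle, the local structure of $X$ near $p$ is confined to the closed wedge between $E_1$ and $E_2$. Hence the four $c_2$-neighbors of $p$ at angles $180^\circ, 225^\circ, 270^\circ$, and $315^\circ$ lie in the exterior of the angle and so cannot belong to $X$. The remaining four $c_2$-neighbors are $(1,0) \in E_1$, $(-1,1) \in E_2$, and the two interior points $b=(1,1)$ and $b'=(0,1)$ which the $135^\circ$-thickness hypothesis forces into $X$. Thus $N(X, p, c_2) = \{(1,0),\,(-1,1),\,(1,1),\,(0,1)\}$.

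Now define $f \colon X \to X$ by $f(p) = (0,1)$ and $f(x) = x$ for $x \neq p$. The only adjacencies that need checking for $c_2$-continuity are those between $p$ and the four points of $N(X, p, c_2)$, and a direct check shows that each of $(1,0)$, $(-1,1)$, and $(1,1)$ is $c_2$-adjacent to $(0,1)$, while $(0,1)$ equals itself. Hence $f \in C(X, c_2)$. Since $f \neq \id_X$ yet $X \setminus \{p\} \subset \Fix(f)$, the set $X \setminus \{p\}$ cannot be a freezing set. Therefore every freezing set $A$ for $(X, c_2)$ must contain~$p$.

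The step I expect to be the main obstacle is the local-geometry argument in the second paragraph: rigorously confirming that none of the $c_2$-neighbors of $p$ on the exterior side of the angle lie in $X$. This depends on the interplay between the definition of a (minimal) bounding curve and the fact that $p$ is the vertex of a $135^\circ$ interior angle formed by $E_1$ and $E_2$, and it must be phrased carefully to exclude the possibility that $X$ ``reaches around'' $p$ to contribute pixels in the exterior wedge.
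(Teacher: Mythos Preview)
Your approach is essentially identical to the paper's: after the same normalization, the paper defines the same map $f$ sending $p$ to $b'=(0,1)$ and fixing everything else, then asserts that $f \in C(X,c_2)$ is ``easily seen'' and derives the contradiction. Your write-up is actually more careful than the paper's, since you explicitly enumerate $N(X,p,c_2)$ and flag the local-geometry step (that the three $c_2$-neighbors of $p$ in the lower half-plane are excluded from $X$) as the point requiring justification---a step the paper leaves implicit.
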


\begin{proof}
By Theorem~\ref{s-cold-invariant}, we may assume $p = (0,0)$, points $(x,y) \in E_1$ satisfy
$x \ge 0 = y$, and points $(x,y) \in E_2$ satisfy $y = -x \ge 0$.

Suppose there is a freezing set $A$ for $(X,c_2)$ such that $p \not \in A$. Since $X$
is $135^{\circ}$-thick at $p$, $b' = (0,1) \in N(X,p, c_2)$ (see Figure~\ref{fig:degrees135c1}).
Then the function $f: X \to X$ given by
\[ f(x) = \left \{ \begin{array}{ll}
   b' & \mbox{if } x=p; \\
   x  &  \mbox{if } x \neq p,
\end{array} \right .
\]
is easily seen to belong to $C(X,c_2)$, with $f|_A = \id_A$ and $d_{c_2}(p,f(p)) = 1$, 
contrary to the assumption that $A$ is freezing. The assertion follows.
\end{proof}

\subsection{$225^{\circ}$ ($5 \pi / 4$ radians)}
\begin{figure}
    \centering
    \includegraphics[height=2in]{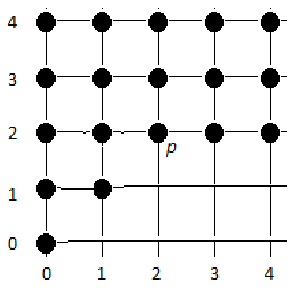}
    \caption{A digital image $X$ used for
    Example~\ref{225degreeExl}. \newline
   If $f \in C(X,c_1)$, $p=(2,2)$,
    $f(1,2) = (1,2)$, and $f(3,2) = (3,2)$, then we must have
    $f(p)=p$. Thus $X \setminus \{p\}$ is a freezing set, hence cold set, for $(X,c_1)$.
    }
    \label{fig:degrees225Vertex}
\end{figure}

The following example shows that a vertex of a $225^{\circ}$ 
($5 \pi / 4$ radians) angle
need not be a member of a given cold set. 

\begin{exl}
\label{225degreeExl}
Let $X = \{(0,0), (0,1), (1,1)\} \cup [0,4]_{\Z} \times [2,4]_{\Z}$ (see
Figure~\ref{fig:degrees225Vertex}). Let $p = (2,2)$. 
Note $p$ is a member of a bounding curve of $X$ and is a vertex
at which the interior angle is $225^{\circ}$ ($5 \pi /4$ radians).
If $A = X \setminus \{p\}$, then $A$ is a freezing set, hence 
a cold set, for both $(X,c_1)$ and $(X,c_2)$.
\end{exl}

\begin{proof}
Let $f \in C(X,c_1)$ be such that $f|_A = \id_A$. Then
\[ f(p) \in N^*(X, (1,2), c_1) \cap N^*(X, (3,2), c_1) = \{p\}.
\]
Thus $f = \id_A$, so $A$ is a freezing set, hence a cold set for $(X,c_1)$.

Let $f \in C(X,c_2)$ be such that $f|_A = \id_A$. Then
\[ f(p) \in N^*(X, (1,1), c_2) \cap N^*(X, (3,3), c_2) = \{p\}.
\]
Thus $f = \id_A$, so $A$ is  a freezing set, hence a cold set for $(X,c_2)$.
\end{proof}

\subsection{$270^{\circ}$ ($3 \pi /2$ radians)}
The following examples show that the vertex of an interior 
angle that measures
$270^{\circ}$ ($3 \pi /2$ radians) need not belong to a
given freezing, hence cold, set
for its digital image when either the $c_1$ or the $c_2$
adjacency is used.

\begin{exl}
\label{unionRectanglesExl}
{\rm
Let $X = ([0,2]_{\Z} \times [0,2]_{\Z}) \cup
          ([2,4]_{\Z} \times[0,3]_{\Z})$
(see Figure~\ref{fig:unionRectangles1}).
A minimal freezing set, and therefore a cold set, for $(X,c_1)$ is
\[ A = \{(0,0), (4,0), (4,3), (2,3), (0,2) \} \mbox{ \cite{BxSubsets}}.
\]
A freezing set, and therefore a cold set, for $(X,c_2)$ is,
by Theorem~\ref{convDiskThmC2Actual},
\[ B = \{(0,i)\}_{i=0}^2 \cup \{(j,0)\}_{j=0}^4 \cup \{(4,k)\}_{k=0}^3 \cup
        \{1,2), (2,3), (3,3)\}.
\]

The point $p=(2,2)$, at which $X$ has an internal angle of
       $270^{\circ}$ ($3 \pi /2$ radians), is not a member of $A$, nor of $B$.
Note $p$ is also not a member of the minimal bounding curve of $X$, which
bypasses $p$ by using the diagonal path $\{(1,2), (2,3)\}$; in general,
a vertex of a bounding curve of an image $Y \in \Z^2$ at which the interior angle is
$270^{\circ}$ ($3 \pi /2$ radians),
is not a member of the minimal bounding curve of $Y$.
}
\end{exl}

\begin{figure}
    \centering
    \includegraphics[height=2in]{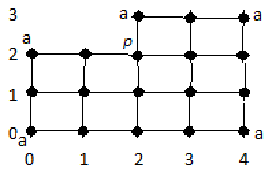}
    \caption{\cite{BxSubsets} The digital image $X$ of
    Example~\ref{unionRectanglesExl}. Points
    of a cold set $A$ for $(X,c_1)$ are marked ``a". Notes: \newline
    1) The point $p=(2,2)$, at which $X$ has an internal angle of
       $270^{\circ}$ ($3 \pi /2$ radians), is not a member of $A$. \newline
    2) The point $p$ does not belong to the minimal bounding curve, since the
       $c_1$-path $\{ (1,2), p, (2,3)\}$ of the $c_1$-bounding curve can be
       replaced by the $c_2$-path $\{ (1,2), (2,3)\}$ to obtain the minimal bounding curve.
       }
    \label{fig:unionRectangles1}
\end{figure}

\begin{exl}
\label{degrees270SlantExl}
Let $X = [0,4]_{\Z}^2 \setminus \{(1,0), (2,0), (2,1), (3,0)\}$
(see Figure~\ref{fig:degrees270slant}).
The point $p=(2,2)$ is the vertex of an interior angle 
of $270^{\circ}$ ($3 \pi /2$ radians) with slanted sides,
and does not belong to every freezing, hence cold, set for
$(X,c_1)$ or for $(X,c_2)$.
\end{exl}

\begin{proof}
Let $A = X \setminus \{p\}$. We will show $A$ is a freezing set, hence a cold set,
for both $(X,c_1)$ and $(X,c_2)$.

Let $f \in C(X,c_1)$ be such that $f|_A = \id_A$. Then
\[ f(p) \in N^*(X, f(1,2), c_1) \cap N^*(X, f(2,3), c_1) \cap N^*(X, f(3,2), c_1) = 
\]
\[    N^*(X, (1,2), c_1) \cap N^*(X, (2,3), c_1) \cap N^*(X, (3,2), c_1) =  \{p\}.
\]
Thus $f = \id_X$, so $A$ is a freezing set, hence a cold set, for $(X,c_1)$.

Let $f \in C(X,c_2)$ be such that $f|_A = \id_A$. Then
\[ f(p) \in N^*(X, f(1,1), c_2) \cap N^*(X, f(2,3), c_2) \cap N^*(X, f(3,1), c_2) = \]
\[ N^*(X, (1,1), c_2) \cap N^*(X, (2,3), c_2) \cap N^*(X, (3,1), c_2) = \{p\}.
\]
Thus $f = \id_X$, so $A$ is a freezing set, hence a cold set, for $(X,c_2)$.
\end{proof}

\begin{figure}
    \centering
    \includegraphics[height=2in]{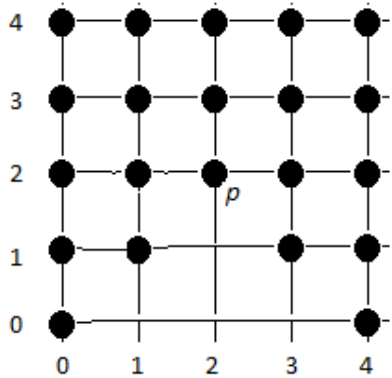}
    \caption{The digital image $X$ of
    Example~\ref{degrees270SlantExl}. The point $p=(2,2)$ is the vertex
    of an interior angle of $270^{\circ}$ ($3 \pi / 2$ radians).
       }
    \label{fig:degrees270slant}
\end{figure}

\subsection{$315^{\circ}$ ($7 \pi /4$ radians)}
\begin{exl}
If $X \subset \Z^2$ is a thick convex digital disk and $p \in X$ is the vertex of an angle in $X$
of measure $315^{\circ}$ ($7 \pi /4$ radians), then $p \in Int(S)$ for any bounding curve $S$ of $X$.
There are cold sets for both the $c_1$ and the $c_2$ 
adjacencies that do not contain $p$.
\end{exl}

\begin{proof}
Figure~\ref{fig:degrees315} shows how in 
a thick convex digital disk $X$ with $p \in X$ as the vertex of an angle of
$315^{\circ}$ ($7 \pi /4$ radians), $p$ must belong to $Int(S)$ for any bounding curve $S$ of $X$. By
Theorems~\ref{convDiskThmActual} and~\ref{convDiskThmC2Actual}, $X$ has freezing sets,
hence cold sets, for both the $c_1$ and the $c_2$ adjacencies, that do not contain $p$.
\end{proof}

\begin{figure}
    \centering
    \includegraphics[height=1.5in]{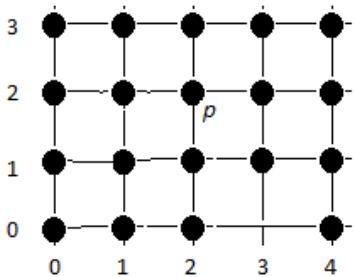}
    \caption{
      The interior angle with sides from $p$ to $(2,0)$ and from $p$ to $(4,0)$
             measures $315^{\circ}$ ($7 \pi /4$ radians). Note $p$ is an interior point
             of the digital image shown.
             }
    \label{fig:degrees315}
\end{figure}

\section{Results for $c_1$ adjacency in $\Z^2$}
\label{c1Sec} 
In this section, we obtain results for cold sets of digital images $X \subset \Z^2$
with respect to the $c_1$ adjacency.

\begin{thm}
\label{c1ConvexVertex}
Let $X$ be a thick convex digital disk in $\Z^2$. Let $S$ be a minimal bounding curve for $X$.
Let $p_0$ be a vertex of $hull(X)$. Let $A$ be a cold set
for $(X,c_1)$. Then $p_0 \in A$.
\end{thm}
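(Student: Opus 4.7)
The plan is to reduce the theorem to a case analysis on the measure of the interior angle of $S$ at $p_0$, and then invoke the appropriate result from Section~\ref{essentialPtSec} in each case. Since $p_0$ is a vertex of $hull(X)$, the interior angle of the polygonal region $hull(X)$ at $p_0$ is strictly less than $180^{\circ}$. Because $X$ is a convex disk with bounding curve $S$ whose maximal segments have endpoints at the vertices of $hull(X)$, the interior angle of $S$ at $p_0$ coincides with the corresponding angle of $hull(X)$. By Remark~\ref{convDiskIntAngles}, this interior angle is a positive multiple of $45^{\circ}$ and strictly less than $180^{\circ}$, hence is one of $45^{\circ}$, $90^{\circ}$, or $135^{\circ}$.

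Having reduced to these three possibilities, I would then apply the previously established propositions. If the angle at $p_0$ is $45^{\circ}$, Proposition~\ref{essentialCold45} applies directly and forces $p_0 \in A$. If the angle is $135^{\circ}$, then since $X$ is thick, Definition~\ref{thickness} tells us $X$ is $135^{\circ}$-thick at $p_0$, and Proposition~\ref{essentialc1cold135} gives the conclusion. If the angle is $90^{\circ}$, I split further according to the orientations of the edges $E_1$ and $E_2$: by Remark~\ref{segSlope}, both edges are either axis-parallel or slanted (the two orientations that meet at a right angle). In the axis-parallel subcase, thickness of $X$ yields $90^{\circ}$-thickness at $p_0$, and Proposition~\ref{90c1} applies. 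In the slanted subcase, thickness again provides $90^{\circ}$-thickness at $p_0$, and Proposition~\ref{essentialc2cold90} (whose conclusion covers cold sets for $(X,c_1)$ as well as for $(X,c_2)$) finishes the argument.

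The main obstacle I expect is not any single case but rather verifying the geometric bookkeeping that lets us apply these propositions: namely, that the interior angle of $S$ at $p_0$ really is the angle of $hull(X)$ at the hull vertex $p_0$, that the edges of $S$ meeting at $p_0$ have the orientations required by the relevant hypotheses, and that thickness of $X$ indeed supplies the form of local thickness needed in each subcase. All of these checks are essentially definitional, drawing on Definition~\ref{thickness} and the description of bounding curves of convex disks, but they must be stated carefully so that the invocations of Propositions~\ref{essentialCold45}, \ref{essentialc2cold90}, \ref{90c1}, and \ref{essentialc1cold135} are literally correct.
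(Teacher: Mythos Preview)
Your proposal is correct and follows essentially the same approach as the paper's proof: reduce to the three possible interior angle measures ($45^{\circ}$, $90^{\circ}$, $135^{\circ}$) via Remark~\ref{convDiskIntAngles}, and then invoke Propositions~\ref{essentialCold45}, \ref{essentialc2cold90}, \ref{90c1}, and~\ref{essentialc1cold135}. Your version is more explicit about verifying the thickness hypotheses and splitting the $90^{\circ}$ case into axis-parallel versus slanted subcases, but the underlying argument is the same.
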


\begin{proof}
Since $X$ is convex, the interior angle of $S$ at $p_0$ must be $45^{\circ}$
($\pi / 4$ radians), $90^{\circ}$ ($\pi / 2$ radians), or $135^{\circ}$ ($3 \pi / 4$ radians).
The assertion follows from Propositions~\ref{essentialCold45},~\ref{essentialc2cold90},~\ref{90c1}, 
and~\ref{essentialc1cold135}.
\end{proof}

\begin{figure}
    \centering
    \includegraphics[height=2in]{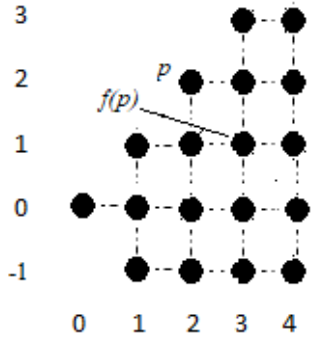}
    \caption{Example of the $c_1$-continuous function $f$ of Proposition~\ref{slantIn-c1}.
             A point $p$ of a slanted edge is marked, as is the point $f(p)$. All other points of the
             image $X$ are fixed points of $f$. Note that $d_{c_1}(p,f(p)) = 2$.
        }
    \label{fig:rectangleSlantEdges}
\end{figure}

\begin{prop}
\label{slantIn-c1}
Let $X$ be a thick digital disk in $\Z^2$ with bounding curve $S$. Let $\sigma$ be
a slanted edge of $S$. Let $p \in \sigma$ such that $p$ is not an endpoint of $\sigma$. Let
$A$ be a cold set for $(X,c_1)$. Then $p \in A$.
\end{prop}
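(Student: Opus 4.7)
The plan follows the template of the earlier propositions in this section (e.g., Propositions~\ref{essentialCold45} and~\ref{essentialc1cold135}): normalize the position of $p$ and $\sigma$ via an isomorphism, use thickness to produce an interior-side witness $c$ with $d_{c_1}(p,c)=2$, build the punctured-identity map that sends $p$ to $c$, verify its $c_1$-continuity, and derive a contradiction from coldness if $p \not\in A$.

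First, applying Theorem~\ref{s-cold-invariant} together with the isomorphisms of $(\Z^2,c_1)$ generated by translations, rotations by $\pi/2$, and reflections, I may assume $p=(0,0)$, that $\sigma$ has slope $+1$ with $(-1,-1), (1,1) \in \sigma \subset X$ (possible because $p$ is not an endpoint), and that $Int(S)$ lies on the upper-left side of $\sigma$ at $p$. Thickness of $X$ then forces slant-thickness at $p$, supplying a point $c \in X$ with $c \adj_{c_2} p$ and $c \not\adj_{c_1} p$; the only such diagonal neighbor on the interior side of $\sigma$ is $c=(-1,1)$, which satisfies $d_{c_1}(p,c)=2$.

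Next, I would define $f\colon X \to X$ by $f(p)=(-1,1)$ and $f(x)=x$ for $x\neq p$. To certify $f \in C(X,c_1)$ it suffices, by the continuity characterization, to check that every $q \in N(X,p,c_1)$ satisfies $q \adjeq_{c_1} (-1,1)$. Among the four $c_1$-neighbors of $p$ in $\Z^2$, the interior-side pair $(-1,0)$ and $(0,1)$ are each $c_1$-adjacent to $(-1,1)$, so they cause no difficulty. Once continuity is confirmed, we have $f|_A = \id_A$ whenever $p \not\in A$, while $d_{c_1}(p,f(p))=2$, contradicting coldness of $A$; the assertion $p\in A$ follows.

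The main obstacle is to rule out the exterior-side $c_1$-neighbors $(1,0)$ and $(0,-1)$ from $X$, since neither is $c_1$-adjacent to $(-1,1)$ and their presence in $X$ would break continuity of $f$. The argument is local: each of these points lies strictly on the exterior side of the line $y=x$ and is not collinear with $\sigma$, so it cannot belong to $\sigma$; because $p$ is in the \emph{interior} of $\sigma$, the bounding curve $S$ coincides with $\sigma$ in a small neighborhood of $p$, so neither $(1,0)$ nor $(0,-1)$ lies on $S$ near $p$. Consequently both points lie in the exterior $c_1$-component of $\Z^2\setminus S$, hence in $\Z^2\setminus X$. With this structural lemma in place, the continuity check goes through and the proof is complete.
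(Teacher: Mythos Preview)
Your proof is correct and follows essentially the same approach as the paper: define the punctured-identity map sending $p$ to its interior-side diagonal neighbor (the paper calls it $q$, obtained from thickness), observe $d_{c_1}(p,f(p))=2$, and derive a contradiction with coldness. The paper's argument is terser---it simply asserts $f \in C(X,c_1)$ by reference to Figure~\ref{fig:rectangleSlantEdges}---whereas you spell out the continuity check by arguing that the exterior-side $c_1$-neighbors of $p$ lie outside $X$.
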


\begin{proof}
By choice of $p$, there exists $q \in Int(X)$ such that $p \adj_{c_2} q$ and $p \not \adj_{c_1} q$.
We must have $p \in A$, for otherwise the function $f: X \to X$ given by
\[ f(x) = \left \{ \begin{array}{ll}
    q & \mbox{if } x = p; \\
    x & \mbox{if } x \neq p,
\end{array}           \right .
\]
(see Figure~\ref{fig:rectangleSlantEdges}) belongs to $C(X,c_1)$, $f|_A = \id_A$, and 
$d_{c_1}(p, f(p)) = 2$, contrary to the assumption that $A$ is cold.
\end{proof}

\begin{thm}
\label{c1coldEquivFreeze}
Let $X$ be a thick convex disk in $\Z^2$. Let $A \subset X$. Then $A$ is a cold set for
$(X,c_1)$ if and only if $A$ is a freezing set for $(X,c_1)$.
\end{thm}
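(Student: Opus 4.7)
The plan is to prove the two directions separately. The reverse direction is immediate from Remark~\ref{coldRemark}: a freezing set is a $0$-cold set (item 1), hence also a $1$-cold set (item 5), i.e., a cold set. So the substantive content is in the forward direction.

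For the forward direction, the idea is to show that any cold set $A$ must contain the explicit minimal freezing set $A_1 \cup A_2$ described in Theorem~\ref{convDiskThmActual}, and then invoke the fact that supersets of freezing sets are freezing sets (which follows directly from Definition~\ref{freezeDef}). Let $S$ be a minimal bounding curve of $X$; since $X$ is a thick convex disk, Remark~\ref{convDiskIntAngles} tells us every interior angle of $S$ at a vertex of $hull(X)$ is $45^{\circ}$, $90^{\circ}$, or $135^{\circ}$, and the endpoints of the maximal edges of $S$ are exactly the vertices of $hull(X)$.

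The two tools I would assemble are Theorem~\ref{c1ConvexVertex} and Proposition~\ref{slantIn-c1}. The former gives that every vertex of $hull(X)$ lies in $A$, and the latter gives that every non-endpoint interior point of a maximal slanted edge of $S$ lies in $A$. Writing $A_1$ for the set of endpoints of maximal axis-parallel edges of $S$ and $A_2$ for the union of the maximal slanted segments of $S$ as in Theorem~\ref{convDiskThmActual}, I would observe: each point of $A_1$ is a vertex of $hull(X)$, hence in $A$ by Theorem~\ref{c1ConvexVertex}; each point of $A_2$ is either an endpoint of a slanted segment (again a vertex of $hull(X)$, hence in $A$) or an interior point of a slanted segment, which lies in $A$ by Proposition~\ref{slantIn-c1}. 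Therefore $A_1 \cup A_2 \subseteq A$.

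Since Theorem~\ref{convDiskThmActual} asserts $A_1 \cup A_2$ is a freezing set for $(X,c_1)$, and any superset of a freezing set is a freezing set, we conclude $A$ is a freezing set. The only conceptual obstacle is verifying that the combined coverage of Theorem~\ref{c1ConvexVertex} and Proposition~\ref{slantIn-c1} really exhausts $A_1 \cup A_2$; this rests on the convexity and thickness of $X$, which ensure that the endpoints of maximal edges of $S$ are precisely the hull vertices and that Proposition~\ref{slantIn-c1} applies at every non-endpoint point of every slanted edge. No delicate case analysis beyond the classification of the interior angles is needed.
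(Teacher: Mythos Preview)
Your proposal is correct and follows essentially the same approach as the paper: use Theorem~\ref{c1ConvexVertex} and Proposition~\ref{slantIn-c1} to show that any cold set contains the set $A_1 \cup A_2$ of Theorem~\ref{convDiskThmActual}, hence is a freezing set, with the converse coming from Remark~\ref{coldRemark}(1),(5). Your write-up is slightly more explicit about why $A_1$ and $A_2$ are each contained in $A$, but the argument is the same.
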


\begin{proof}
Let $A$ be a cold set for $(X,c_1)$. Let $S$ be a minimal bounding curve for $X$.
From Theorem~\ref{c1ConvexVertex}, we know that 
\begin{equation}
\label{c1endsIn}
\mbox{the endpoints of edges of } S \mbox{ belong to $A$.}
\end{equation}
It follows from Proposition~\ref{slantIn-c1} that
\begin{equation}
\label{c1slantedEdgesIn}
\mbox{every slanted edge of $S$ is a subset of $A$.}
\end{equation}
It follows from~(\ref{c1endsIn}), (\ref{c1slantedEdgesIn}), and Theorem~\ref{convDiskThmActual} 
that $A$ is a freezing set for $(X,c_1)$.

The converse follows from Remark~\ref{coldRemark}(1),(5).
\end{proof}

\section{Results for $c_2$ adjacency in $\Z^2$}
\label{c2Sec}
In this section, we obtain results for cold sets of digital images $X \subset \Z^2$
with respect to the $c_2$ adjacency.

\begin{prop}
\label{c2coldEquivFreeze}
Let $X$ be a 4-sided thick digital disk in $\Z^2$, all sides of which are slanted. Let
$A$ be the set of endpoints of the edges of $X$. Then $A$ is a minimal cold set for $(X,c_2)$.
\end{prop}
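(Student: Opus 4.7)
My plan is to split the proposition into two parts: (a) show that $A$ is a cold set, and (b) show that $A$ is minimal among cold sets. For part (a), I would reduce to a freezing-set claim, because a freezing set is a $0$-cold set by Remark~\ref{coldRemark}(1), and hence a $1$-cold set (i.e., a cold set) by Remark~\ref{coldRemark}(5).

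The first step is the purely geometric observation that a $4$-sided thick digital disk all of whose sides are slanted must be convex. By Remark~\ref{segSlope} every edge of a bounding curve $S$ has slope $\pm 1$, so two consecutive edges cannot share a slope (else they would merge into one edge) and must therefore be perpendicular. Thus each interior angle of $S$ is either $90^\circ$ or $270^\circ$. Since the interior angles of a quadrilateral sum to $(4-2)\cdot 180^\circ = 360^\circ$, the only possibility is four copies of $90^\circ$, so $X$ is a thick \emph{convex} disk shaped like a rotated square (a ``diamond'').

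This opens the door to Theorem~\ref{convDiskThmC2Actual}. In that theorem's notation, $B_2$ (the union of maximal axis-parallel edges of a minimal bounding curve) is empty, while $B_1$ (the endpoints of the maximal slanted edges) is precisely $A$. So $A = B_1 \cup B_2$ is a minimal freezing set for $(X,c_2)$, and hence a cold set by the chain Remark~\ref{coldRemark}(1)$\Rightarrow$(5). For minimality \emph{as a cold set}, I will note that each $p \in A$ is the vertex of a $90^\circ$ interior angle of $S$ formed by two slanted edges, and the thickness hypothesis makes $X$ be $90^\circ$-thick at $p$; Proposition~\ref{essentialc2cold90} then forces $p$ to lie in every cold set for $(X,c_2)$, so no proper subset of $A$ can be cold.

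I expect the only real obstacle to be justifying the convexity observation of step one, since after that the argument is essentially bookkeeping: matching the sets $B_1, B_2$ of Theorem~\ref{convDiskThmC2Actual} to $A$, passing from freezing to cold via Remark~\ref{coldRemark}, and using Proposition~\ref{essentialc2cold90} one vertex at a time to obtain minimality.
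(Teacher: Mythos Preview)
Your proposal is correct and follows essentially the same route as the paper: apply Theorem~\ref{convDiskThmC2Actual} to get that $A$ is a freezing (hence cold) set, then invoke Proposition~\ref{essentialc2cold90} at each vertex to obtain minimality. Your added convexity justification is a welcome detail the paper leaves implicit, since Theorem~\ref{convDiskThmC2Actual} is stated only for convex disks.
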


\begin{proof}
By Theorem~\ref{convDiskThmC2Actual}, $A$ is a freezing set, hence a cold set for $(X,c_2)$. 
It follows from Proposition~\ref{essentialc2cold90} that $A$ is minimal as a cold set.
\end{proof}

\begin{prop}
\label{c2-coldSetRectangle}
{\rm \cite{BxFpSets}}
Let $m,n \in \N$. Let
$X = [0,m]_{\Z} \times [0,n]_{\Z}$.
Let $A \subset Bd_1(X)$ be such that
no pair of $c_1$-adjacent members of $Bd_1(X)$ 
belong to $Bd_1(X) \setminus A$.
Then $A$ is a cold set for $(X,c_2)$. Further,
for all $f \in C(X,c_2)$, if $f|_A = \id_A$ then
$f|_{Int(X)} = \id|_{Int(X)}$.
\end{prop}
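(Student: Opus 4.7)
The plan is to fix an arbitrary $f \in C(X,c_2)$ with $f|_A = \id_A$ and establish two things in sequence: (i) $f(p) = p$ for every $p \in Int(X)$, and (ii) $d_{c_2}(x,f(x)) \le 1$ for every $x \in X$. Both will be derived from Lemma~\ref{c1pulling} together with the ``independent gap'' condition on $A$, which forces at least one of any two $c_1$-consecutive boundary points to lie in $A$, hence in $\Fix(f)$.

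For (i), I would take $p = (i,j) \in Int(X)$, suppose for contradiction $f(p)\neq p$, and, after a reflection of $X$ if necessary, reduce to the case $p_1(f(p)) > i$. I would then prove, by downward induction on $k$ from $k = i$ to $k = 0$, the ``light-cone'' statement
\[
   p_1(f(k,j')) \;>\; k \qquad \text{for all } 0 \le k \le i \text{ with } |j'-j| \le i-k.
\]
The inductive step uses Lemma~\ref{c1pulling} applied through the $c_2$-adjacency: purely horizontal pulls handle the passage from $(k,j')$ to $(k{-}1,j')$ via the $c_1$-edge between them, while the boundary case $|j'-j| = i-k+1$ is handled by a diagonal pull from $(k,j'{\mp}1)$ to $(k{-}1,j')$, which is a $c_2$-edge that still lies in $X$ because the cone widens by one unit in $j$ per unit decrease in $k$. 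Instantiating at $k = 0$ yields $p_1(f(0,j')) > 0$ for the three consecutive boundary points $j' \in \{j-1,j,j+1\}$ (all of which are in $X$ since $p$ is interior). The hypothesis on $A$ forces at least one such $(0,j')$ into $A$, hence into $\Fix(f)$, yielding the contradiction $0 = p_1(f(0,j')) > 0$.

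For (ii), given (i), take $b \in Bd_1(X) \setminus A$. By the hypothesis on $A$, both $c_1$-neighbors of $b$ in $Bd_1(X)$ lie in $A$ and are therefore fixed; by (i), every $c_2$-neighbor of $b$ in $Int(X)$ is also fixed. The $c_2$-continuity of $f$ then forces $f(b)$ into the intersection $\bigcap_q N^*(X,q,c_2)$ taken over these fixed $c_2$-neighbors $q$ of $b$. A short case analysis on the position of $b$ (non-corner edge point versus corner, plus the narrow-strip cases $m=1$ or $n=1$, where $Int(X)=\emptyset$ and (i) is vacuous) shows that in each configuration this intersection is already contained in $N^*(X,b,c_2)$, giving $d_{c_2}(b,f(b)) \le 1$.

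The main obstacle is the light-cone induction: one must check that every pull step uses a point already known to be moved rightward and a $c_2$-adjacent partner that remains inside $X$, which is exactly why the cone is defined to widen by one unit in $j$ per unit decrease in $k$ and why interior-ness of $p$ is essential to reach three consecutive boundary points. The boundary computation in (ii) is routine but branch-heavy, and it is precisely this step that explains why the conclusion is only ``cold'' rather than ``freezing''—Example~\ref{90c2exl} confirms that boundary points outside $A$ may genuinely shift by one $c_2$-step.
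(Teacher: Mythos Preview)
This proposition is cited from~\cite{BxFpSets} and is not proved in the present paper; however, the paper proves a strict generalization, Theorem~\ref{c2-coldSetAxisParallel}, whose argument specializes to the rectangle and is the natural comparison. Both your proposal and that proof split into the interior and boundary cases and drive everything from Lemma~\ref{c1pulling}, but the organization differs. For an interior point $x=(i,j)$, the paper simply builds one $c_2$-path through $x$, monotone in the first coordinate, whose endpoints already lie in $A$: take the horizontal segment through $x$, and if an endpoint on $S$ happens to miss $A$, replace it by the $c_1$-adjacent point of $S$ (which must lie in $A$ by the gap hypothesis). A single two-sided application of Lemma~\ref{c1pulling} along this path then forces $p_1(f(x))=p_1(x)$, and the same in the second coordinate. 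Your light-cone reaches the same conclusion by propagating ``moved right'' to the three left-boundary points $(0,j-1),(0,j),(0,j+1)$ and invoking the gap hypothesis there; this is correct in substance but heavier than necessary. One caution: your assertion that the cone ``still lies in $X$'' is false when $i>\min(j,n-j)$, so the full inductive statement must be read as restricted to points of $X$; fortunately the three paths you actually use (horizontal, and one diagonal step followed by horizontals) do stay in $X$ because $p$ is interior, so the argument survives. For the boundary step, the paper uses only the two $c_1$-neighbors $y_0,y_1\in A$ of $b$ and the observation that $N^*(X,y_0,c_2)\cap N^*(X,y_1,c_2)\subset N^*(X,b,c_2)$ whether $y_0,b,y_1$ are collinear or form a right angle; your additional appeal to fixed interior neighbors is harmless but not needed. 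In short: your plan is sound, but the paper's single-path construction is cleaner and avoids the light-cone bookkeeping.
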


We extend Proposition~\ref{c2-coldSetRectangle} as follows.

\begin{thm}
\label{c2-coldSetAxisParallel}
Let $X$ be a thick disk in $\Z^2$ 
with bounding curve $S$ made up of axis-parallel segments.
Let $A \subset S$ be such that
\begin{equation}
\label{noPair}
    \mbox{no pair of $c_1$-adjacent members of $S$ 
belong to $S \setminus A$.}
\end{equation}
Then $A$ is a cold set for $(X,c_2)$. Further,
for all $f \in C(X,c_2)$ such that $f|_A = \id_A$, we have
$f|_{Int(X)} = \id|_{Int(X)}$.
\end{thm}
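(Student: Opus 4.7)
My plan is to generalize the two-part structure of Proposition~\ref{c2-coldSetRectangle}: first prove the stronger interior-fixation statement $f|_{Int(X)} = \id_{Int(X)}$, and then deduce $d_{c_2}(x, f(x)) \le 1$ on $S \setminus A$ (the only part of $X$ not already covered by the first part and by $f|_A = \id_A$). Both parts combine Lemma~\ref{c1pulling} (applied with $u=2$) with $c_2$-continuity against the fixed anchors in $A$, and the essential use of hypothesis~(\ref{noPair}) is to force certain $c_1$-neighbors lying in $S$ to be fixed.

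For the interior fixation, suppose for contradiction that $f(x) \neq x$ for some $x \in Int(X)$. By the symmetry of the four coordinate directions I may assume $p_1(f(x)) > p_1(x)$. Walking leftward from $x$ produces a horizontal $c_1$-path $y_k = x, y_{k-1}, \ldots, y_0 = a'$ with $a' - (1, 0) \notin X$; since $X$ is a disk in $\Z^2$, the Jordan curve property gives $Bd_1(X) \subseteq S$, so $a' \in S$. Iterated application of Lemma~\ref{c1pulling}(1) along the path yields $p_1(f(y_i)) > p_1(y_i)$ for every $i$. If $a' \in A$, this contradicts $f(a') = a'$; so assume $a' \notin A$, whence~(\ref{noPair}) forces every $c_1$-neighbor of $a'$ lying in $S$ to belong to $A$ and be fixed by $f$.

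I now split on the axis-parallel structure of $S$ at $a'$. Since $a' - (1, 0) \notin X$, no edge of $S$ at $a'$ points west, so either $a'$ lies in the interior of a vertical edge (both $a' \pm (0,1) \in S$) or $a'$ is a corner with an east-going edge ($a' + (1, 0) \in S$). In the straight-vertical subcase, $a' \pm (0, 1)$ are fixed, and $c_2$-continuity at $y_1 = a' + (1, 0)$ gives
\[
f(y_1) \in N^*(a' + (0, 1), c_2) \cap N^*(a' - (0, 1), c_2) \cap X = \{a', y_1\},
\]
whose first coordinates are only $p_1(a')$ and $p_1(y_1) = p_1(a') + 1$. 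But pulling at $y_1$ forces $p_1(f(y_1)) \ge p_1(y_1) + 1 = p_1(a') + 2$, a contradiction. In the corner subcase, $y_1 = a' + (1, 0) \in S$ is a $c_1$-neighbor of $a' \in S \setminus A$, so~(\ref{noPair}) puts $y_1 \in A$, and then $f(y_1) = y_1$ directly contradicts $p_1(f(y_1)) > p_1(y_1)$. This completes the interior-fixation step.

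For the cold-set claim on $x \in S \setminus A$, by~(\ref{noPair}) every $c_1$-neighbor of $x$ in $S$ is fixed; running through the possible axis-parallel configurations at $x$ (interior of a straight segment, $90^{\circ}$ convex corner, $270^{\circ}$ reflex corner), the intersection of $N^*(\cdot, c_2)$ over these two fixed anchors, restricted to $X$, always lies inside $N^*(x, c_2)$, so $d_{c_2}(x, f(x)) \le 1$. The main obstacle throughout is the straight-vertical subcase of the interior-fixation argument: the contradiction cannot be drawn at $a'$ itself (where $c_2$-continuity merely pins $f(a') \in \{a', y_1\}$, compatible with the pulling bound $p_1(f(a')) > p_1(a')$), but must be drawn one step inside at $y_1$, where the same two fixed $c_2$-diagonal anchors $a' \pm (0,1)$ pin $f(y_1)$ to the same set $\{a', y_1\}$, now incompatible with the sharper pulling bound on $p_1(f(y_1))$.
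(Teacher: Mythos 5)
Your proof is correct. The part for $x \in S \setminus A$ — forcing the two $c_1$-neighbors of $x$ along $S$ into $A$ via~(\ref{noPair}) and pinning $f(x)$ in the intersection of their $c_2$-neighborhoods — is essentially the paper's own argument for that case. Where you genuinely diverge is the interior step. The paper does not argue by contradiction at the boundary: for each $x \in Int(X)$ it constructs a $c_2$-path through $x$, monotone in the first coordinate, with \emph{both endpoints in $A$}, by starting from the minimal horizontal segment through $x$ with endpoints in $S$ and, when an endpoint lies in $S \setminus A$, rerouting the final step diagonally to an adjacent member of $S$ (which~(\ref{noPair}) places in $A$); Lemma~\ref{c1pulling} applied toward each fixed endpoint then gives $p_1(f(x)) = p_1(x)$ directly, and likewise $p_2(f(x)) = p_2(x)$. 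You instead keep the path strictly horizontal, run it all the way to $a' \in Bd_1(X) \subseteq S$, accept that $a'$ may fail to lie in $A$, and close the argument one step inside: in the corner subcase $y_1 = a' + (1,0)$ lands in $S$ and hence in $A$ by~(\ref{noPair}), and in the straight-vertical subcase the two fixed anchors $a' \pm (0,1)$ pin $f(y_1)$ into $\{a', y_1\}$, contradicting the pulling bound $p_1(f(y_1)) > p_1(y_1)$. Both routes are sound; the paper's buys a single uniform mechanism (pull toward a fixed endpoint of the path) at the cost of the diagonal detour and its small case analysis, while yours avoids constructing the detour but pays with the local neighborhood computation at $y_1$ — correctly placed there, since, as you observe, the same computation at $a'$ itself is compatible with the pulling bound and yields no contradiction.
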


\begin{proof}
Let $f \in C(X,c_2)$ be such that $f|_A = \id_A$. Let $x \in X$. We
must show $x$ is an approximate fixed point of $f$, i.e., that
$x \adjeq_{c_2} f(x)$. We consider the following cases.
\begin{itemize}
    \item If $x \in A$ then $x=f(x)$.
    \item If $x \in S \setminus A$ then since $S$ has only 
          axis-parallel segments, by~(\ref{noPair}), there are 
          distinct $y_0, y_1 \in A$ such that 
          $y_0 \adj_{c_1} x \adj_{c_1} y_1$.
          Therefore, either $y_0,x$, and $y_1$ are collinear or these points
          form a right angle at $x$. In either case, the 
          continuity of $f$ implies 
          \[ y_0 = f(y_0) \adjeq_{c_2} f(x) \adjeq_{c_2} f(y_1)=y_1.
          \]
          Thus, $f(x) \in N(X,y_0,c_2) \cap N(X,y_1,c_2)$, which implies
          $x \adjeq_{c_2} f(x)$. 
    \item If $x \in Int(X)$ then we create a $c_2$-path $P=\{q_i\}_{i=0}^n$ 
          through $x$ such that $q_i \adj_{c_2} q_{i+1}$ and
          $p_1(q_{i+1}) = p_1(q_i)+1$ for $i \in \{0, \ldots, n-1\}$, 
          and $\{q_0,q_n\} \subset A$. This is
          done as follows. Let $L$ be a minimal horizontal line segment 
          with the properties of containing $x$ and having
          endpoints in $S$.
          \begin{itemize}
              \item If the endpoints of $L$ are both members of $A$, take
              $P=L$ with $q_0, q_n$ as these endpoints.
              \item (See Figure~\ref{fig:TeeWithColdSet}(i).) 
                    If an endpoint of the horizontal through $x$ is in a 
                    vertical segment of $S$ and not in $A$, by~(\ref{noPair}) 
                    we can replace the endpoint with one of the adjacent members of $S$.
              \item (See Figure~\ref{fig:TeeWithColdSet}(ii).) 
                    If an endpoint of the horizontal through $x$ meets $S \setminus A$ 
                    at the vertex of a right angle, by~(\ref{noPair}) 
                    we can replace the endpoint with the adjacent member of the incident vertical side of $S$.
          \end{itemize}
          In all cases, the resulting path $P$ has endpoints in $A$ and is 
          monotone increasing, from left to right, in the first coordinate. By Lemma~\ref{c1pulling},
          $p_1(f(x)) = p_1(x)$.
          
          Similarly, $p_2(f(x))=p_2(x)$. Thus, $f(x)=x$.
\end{itemize}
Thus we have shown that for all $x \in X$, $f(x) \adjeq_{c_2} x$;
and $f|_{Int(X)}=\id_{Int(X)}$.
\end{proof}

\begin{figure}
    \centering
    \includegraphics[height=2in]{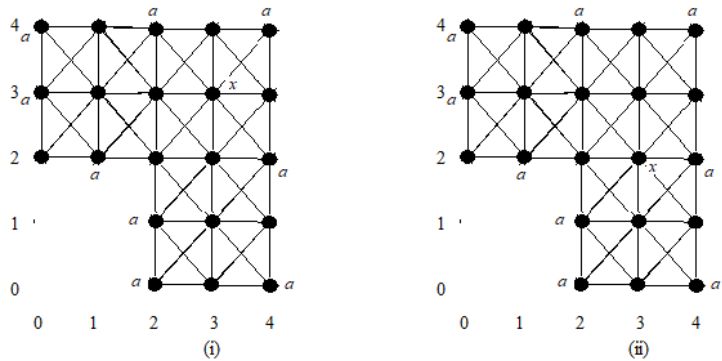}
    \caption{A digital disk $X$ with axis-parallel boundary segments 
    to illustrate Theorem~\ref{c2-coldSetAxisParallel}.
    Points of the set $A$ are labeled ``{\em a}". No $c_1$-adjacent pair of members of the bounding curve $S$ belong to $S \setminus A$. Given $x \in Int(X)$, we
    want to find a $c_2$-path between members of $A$ through $x$ that is monotone
    increasing in the first coordinate. This is easy if a horizontal segment
    in $X$ through $x$ has endpoints in $A$. Otherwise:\newline
    (i) If an endpoint of the horizontal through $x$ is in a vertical segment of
        $S$ and not in $A$, replace the endpoint with one of the adjacent members
        of $S$. E.g., with $x=(3,3)$, replace $(4,3)$ with $(4,2)$. This yields
        the $c_2$-path $\{(i,3)\}_{i=0}^3 \cup \{(4,2)\}$.        \newline
    (ii) If an endpoint of the horizontal through $x$ meets 
         $S \setminus A$ at the vertex of a right angle, replace the
         endpoint with the adjacent member of the incident vertical
         side of $S$. E.g., with $x=(3,2)$, replace $(2,2)$ 
         with $(2,1)$. This yields the $c_2$-path
         $\{(2,1), (3,2), (4,2)\}$.
        }
    \label{fig:TeeWithColdSet}
\end{figure}

\section{More on the choice of adjacency}
\label{contrastAdjs}
Example~\ref{c2Square-n-cold} below shows the importance of the adjacency used, since
by Theorems~\ref{corners-min} and~\ref{s-cold-invariant}, for the
same sets $X$ and $A$, with the $c_1$ adjacency, $A$ is a freezing set.

\begin{exl}
\label{c2Square-n-cold}
Let $X = [-n,n]_{\Z}^2$ for $n \ge 1$. Let 
\[A = \{(-n,-n), (-n,n), (n, -n), (n,n)\}.
\]
Then, for $(X,c_2)$, $A$ is an $n$-cold set and not an $(n-1)$-cold set.
\end{exl}

\begin{proof}
Let $f \in C(X,c_2)$ such that $f|_A = \id_A$. By Proposition~\ref{uniqueShortestProp}, the diagonals
\begin{equation} 
\label{diagEq}
D = \{(x,y) \in X \, | \, y = \pm x \} \subset \Fix(f).
\end{equation}
Consider the digital triangle of points,
\[ T_1 = \{(x,y) \in X \, | \, 0 \le x \le n, ~ 0 \le y \le x \}.
   \]
(see Figure~\ref{fig:c2coldOnBd-a}).

\begin{figure}
    \centering
    \includegraphics[height=3in]{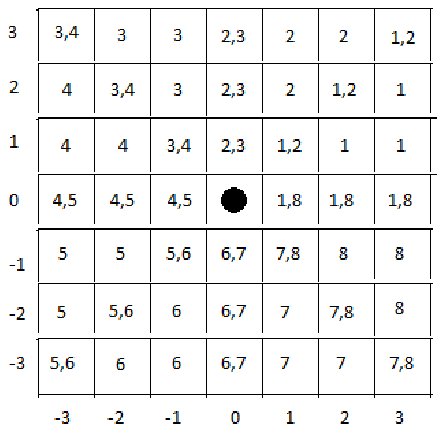}
    \caption{$X=[-n,n]_{\Z}^2$ (shown for $n=3$) for Example~\ref{c2Square-n-cold}.
     Pixels (other than the origin) labeled $i$ belong to triangle $T_i$. The origin belongs to $\bigcap_{i=1}^8 T_i$; other members of $D$ (the union of the two diagonals)
     each belong to two distinct $T_i$.\newline
           }
    \label{fig:c2coldOnBd-a}
\end{figure}

 Let $q=(u,v) \in T_1$. We will show that
 \begin{equation}
     \label{p_1Restricted}
     |u - p_1(f(q))| \le n.
 \end{equation}
Suppose otherwise. Then $u - p_1(f(q)) > n$. Let $q_1 = (n,v)$. By the $c_1$-continuity of $f$,
it follows from Lemma~\ref{c1pulling} that $p_1(f(q_1)) < 0$. Therefore,
$d_{c_2}(f(q_1), f(n,n)) = d_{c_2}(f(q_1), (n,n)) > n$ although
$d_{c_2}(q_1, (n,n)) = n-v \le n$. This is a contradiction, since $f \in C(X,c_2)$.
Thus~(\ref{p_1Restricted}) is established.

Next, we show
\begin{equation}
    \label{p_2Restricted}
    p_2(f(u,v)) = v.
\end{equation}
This follows from Lemma~\ref{c1pulling}, since $p_2(f(u,v)) < v$ would imply $p_2(f(u,u)) < u$, contrary to
$(u,u) \in \Fix(f)$; and $p_2(f(u,v)) > v$ would imply $p_2(f(u,-u)) > -u$, contrary to
$(u,-u) \in \Fix(f)$.

From~(\ref{p_1Restricted}) and~(\ref{p_2Restricted}), it follows that
$d_{c_2}(q,f(q)) \le n$. 

Similarly, $d_{c_2}(q,f(q)) \le n$ for $q$ a member of each of the following
subsets of $X$.
\[ T_2 = \{(x,y) \in X \, | \, 0 \le x \le y \le n \}, 
\]
\[ T_3 = \{(x,y) \in X \, | \, -n \le x \le 0, -x \le y \le n \},
\]
\[ T_4 = \{(x,y) \in X \, | \,  -n \le x \le 0 \le y \le -x \},
\]
\[ T_5 = \{(x,y) \in X \, | \,  -n \le x \le y \le 0 \},
\]
\[ T_6 = \{(x,y) \in X \, | \,  -n \le y \le x \le 0 \},
\]
\[ T_7 = \{(x,y) \in X \, | \, 0 \le x \le n, -n \le y \le -x \},
\]
\[ T_8 = \{(x,y) \in X \, | \, 0 \le x \le n, -x \le y \le 0 \}.
\]
Since $X = \bigcup_{i=1}^8 T_i$, we have $d_{c_2}(q,f(q)) \le n$ for all $q \in X$.
It follows that $A$ is an $n$-cold set for $(X,c_2)$.

To see $A$ is not an $(n-1)$-cold set for $(X,c_2)$, let
$g: X \to D \subset X$ be defined for $q=(u,v)$ by
\[ g(q) = \left \{ \begin{array}{ll}
    (u,|u|) & \mbox{if }  q \in T_2 \cup T_3; \\
    q & \mbox{otherwise} \\
\end{array} \right .
\]
(see Figure~\ref{fig:c2coldOnBd-b}).
Roughly, $g$ projects $T_2 \cup T_3$ vertically to $D$ and leaves all other points of $X$ fixed.
It is easy to see that $g \in C(X,c_2)$, $g|_A = \id_A$, and 
\[ d_{c_2}((0,n), g(0,n)) = d_{c_2}((0,n), (0,0)) = n.
\]
Thus $A$ is not an $(n-1)$-cold set for $(X,c_2)$.
\end{proof}

\begin{figure}
    \centering
    \includegraphics[height=3in]{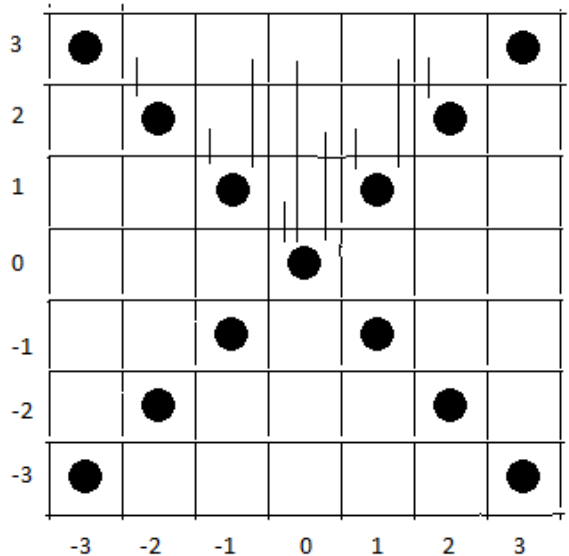}
    \caption{$X=[-n,n]_{\Z}^2$ (shown for $n=3$) for Example~\ref{c2Square-n-cold}.
     Illustration of the function $g$. Members of $D$ are shown dotted.
        Each $q \in X \setminus \Fix(g)$ is shown joined to $g(q) \in D$ by a line segment.
           }
    \label{fig:c2coldOnBd-b}
\end{figure}

\section{Remarks on~\cite{KangHan}}
\label{HanSec}
Much of this section is quoted or paraphrased from a 
comment posted to the website researchgate.com 
on the paper~\cite{KangHan}, by Kang and Han.
The paper has
a result that is original, correct and correctly proven, and
interesting; however, the paper is greatly flawed.
Some papers cited in~\cite{KangHan} should not
be rewarded with automated incremented citation counts,
so papers cited in this section but not otherwise cited 
in the current paper are listed at the end of this section
rather than among the references at the end of this paper.
Papers cited elsewhere in the current paper are referenced 
as elsewhere in the current paper.

The paper~\cite{KangHan} misguides readers through much
of the literature of the almost
fixed point property, also known as the approximate
fixed point property (AFPP), for digital images. The AFPP
generalizes the familiar fixed point property (FPP).

On page 7216, we find ``$\ldots$ any digital space $(X, k)$
on $\Z^n$ does not have the FPP for (digitally) $k$-continuous 
maps~\cite{Rosenfeld}
(for more details, see [H19, H20])." This statement is
obviously false if $X$ has a single point;
it is true when $X$ has more than one point (indeed, this case
is noted on page 7218), the proof appearing 
in~\cite{BEKLL}, a paper not cited in~\cite{KangHan}. 
While~\cite{Rosenfeld} gives examples of digital images
$(X, k)$ that lack the FPP, it says nothing like the general
statement above attributed to it by~\cite{KangHan}. Further,
Han's papers [H19, H20] contribute nothing to our knowledge of
this assertion. Variants on these errors appear
in the last paragraph of page 7218 of~\cite{KangHan},
where we also find Han's paper~[H17] falsely credited as
a source for the FPP assertion above.

Also on page 7216, we find ``$\ldots$ Banach contraction principle and a
Cauchy sequence for complete metric spaces, we have also 
studied this issue$\ldots$" followed by citation of several papers.
However, Cauchy sequences for digital metric spaces were shown
in Han's own paper~[H16] to be trivial, and
the Banach contraction principle was shown
in [BxSt] to be trivial for digital images when the most
natural metrics, including all $\ell_p$ metrics, are used. 
Indeed, most of the ``contributions" for digital metric spaces
in papers cited by~\cite{KangHan} were
shown in [BxSt] to be either trivial or incorrect.

Example 3.2(1)  of~\cite{KangHan} has no
originality, being a case of Theorem~3.3 of~\cite{Rosenfeld}. 

Example 3.2(2)  of~\cite{KangHan} has no
originality in either of its parts.
Part~(1-1) is implied by Theorem~3.5 of~\cite{BxApprox1}.
Both parts, (1-1) and~(1-2), are implied by
Theorem~4.8 of~\cite{BxApprox2}.

The adjacency given at Definition 4.1 of~\cite{KangHan},
attributed to Han's paper~\cite{Han05},
is the normal product adjacency. It was in the literature 
for decades before~\cite{Han05} appeared; see, e.g., [Ber].

Example~4.1 of~\cite{KangHan} should have been derived as an
immediate consequence of Theorem~3.3 of~\cite{Rosenfeld}
and Theorem~4.2 of~\cite{BEKLL}.

The only significant original contribution of~\cite{KangHan} 
is its Theorem~4.4. The assertion that the normal product of 
$(X,k_1)$ and $(Y,k_2)$ has
the AFPP if the factors have the AFPP, is correctly proven. 
However, the implication that the factors have the AFPP if
the product with the normal product adjacency has the AFPP, 
follows immediately and more simply from the fact that 
retractions preserve the AFPP~\cite{BEKLL}. Also, it should be noted
that Theorem 4.4 generalizes Theorem 4.5 of~\cite{BxApprox2}.

Theorem 4.9 of~\cite{KangHan} is unoriginal. See Theorem~4.8
of~\cite{BxApprox2}. \newline

References\newline

[Ber] C. Berge, Graphs and Hypergraphs, 2nd edition. North-Holland,
Amsterdam, 1976.\newline


[BxSt] L. Boxer and P.C. Staecker, Remarks on Fixed Point Assertions
in Digital Topology, Applied General Topology 20 (1) (2019), 135-153.\newline

[H16] S.-E. Han, Banach fixed point theorem from the viewpoint of
digital topology,
{\em Journal of Nonlinear Science and Applications}
9 (2016), 895 - 905\newline

[H17] S.-E. Han, Fixed point property for digital spaces, 
{\em Journal of Nonlinear Sciences and Applications}, 10 (2017), 2510-2523\newline

[H19] S.-E. Han, Remarks on the preservation of the almost
fixed point property involving several types
of digitizations, {\em Mathematics}, 7 (2019), 954.\newline

[H20] S.-E. Han, Digital $k$-contractibility of an $n$-times 
iterated connected sum of simple closed k-surfaces
and almost fixed point property, {\em Mathematics}, 8 (2020), 345
\newline

\section{Further remarks}

We have studied properties of cold sets for digital images $(X,\kappa)$ in the digital plane.
In sections~\ref{generalResultSec} and~\ref{essentialPtSec}, we have considered
essential members of cold sets for $(X,\kappa)$. In sections~\ref{c1Sec} 
and~\ref{c2Sec}, for the $c_1$ and $c_2$ adjacencies, respectively, we have
derived cold sets for thick digital disks $X$, with particular attention to
convex disks. In section~\ref{contrastAdjs}, we showed that the same sets 
$X \subset \Z^2$, $A \subset X$, can have very different cold-set properties
depending on whether the adjacency considered is $c_1$ or $c_2$.

The suggestions of an anonymous reviewer are acknowledged with
gratitude.


\begin{thebibliography}{99}

\bibitem{Bx94}
L. Boxer, Digitally continuous functions, 
{\em Pattern Recognition Letters} 15 (1994), 833-839.


\bibitem{Bx99}
 L. Boxer,  A classical construction for the digital fundamental group,
{\em Journal of Mathematical Imaging and Vision} 10 (1999), 51-62.

 

\bibitem{BxNormal}
L. Boxer, Generalized normal product adjacency in digital topology, 
{\em Applied General Topology} 18 (2) (2017), 401-427


\bibitem{BxAlt}
L. Boxer, Alternate product adjacencies in digital topology,
{\em Applied General Topology} 19 (1) (2018), 21-53


\bibitem{BxApprox1}
 L. Boxer, 
 Approximate fixed point properties in digital topology,
 {\em Bulletin of the International Mathematical Virtual Institute}
 10 (2) (2020), 357-367

\bibitem{BxApprox2}
 L. Boxer, Approximate fixed point property for digital trees
 and products, 
 {\em Bulletin of the International Mathematical Virtual Institute}
 10(3) (2020), 595-602

\bibitem{BxFpSets}
L. Boxer, Fixed point sets in digital topology, 2, 
{\em Applied General Topology} 21(1) (2020), 111-133.


\bibitem{BxConvex}
L. Boxer, Convexity and freezing sets in digital topology, 
{\em Applied General Topology} 22 (1) (2021), 121 - 137.

\bibitem{BxSubsets}
 L. Boxer, Subsets and freezing sets in the digital plane, 
 {\em Hacettepe Journal of Mathematics and Statistics} 50 (4)
 (2021), 991 - 1001
 
\bibitem{BxConsequences}
L. Boxer,
Some consequences of restrictions on digitally
continuous functions, submitted. Available at \newline
https://arxiv.org/submit/4163172 

\bibitem{BEKLL}
L. Boxer, O. Ege, I. Karaca, J. Lopez, and J. Louwsma, 
Digital fixed points, approximate fixed points, and universal functions, 
{\em Applied General Topology} 17(2), 2016, 159-172.
 
 
 
 


\bibitem{bs19a}
L. Boxer and P.C. Staecker,
Fixed point sets in digital topology, 1,
{\em Applied General Topology} 21 (1) (2020), 87-110.


\bibitem{Chen94}
L. Chen,
Gradually varied surface and its optimal uniform approximation,
{\em SPIE Proceedings} 2182 (1994), 300-307.


\bibitem{Chen04}
L. Chen, {\em Discrete Surfaces and Manifolds},
Scientific Practical Computing, Rockville, MD, 2004.



\bibitem{hmps} 
J. Haarmann, M.P. Murphy, C.S. Peters, and P.C. Staecker,
Homotopy equivalence in finite digital images,
{\em Journal of Mathematical Imaging and Vision} 53 (2015), 288-302.


\bibitem{Han05}
S-E Han, Non-product property of the digital fundamental group, 
{\em Information Sciences} 171 (2005), 7391.


\bibitem{KangHan}
J.M. Kang and S-E. Han,
The product property of the almost fixed point property for 
digital spaces
{\em AIMS Mathematics} 6 (7) (2021), 7215-7228

\bibitem{KongKop}
T.Y. Kong and R. Kopperman,
Digital topology,
in K.P. Hart, Jun-iti Nagata, J.E. Vaughan, eds.,
{\em Encyclopedia of General Topology}, Elsevier, 2004

\bibitem{KongEtal}
T.Y. Kong, R. Kopperman, and P.R. Meyer,
A topological approach to digital topology,
{\em The American Mathematical Monthly} 98 (10) (1991), 901-917

\bibitem{RosenfeldMAA}
A. Rosenfeld,
Digital topology,
{\em The American Mathematical Monthly} 86 (8) (1979), 621-630.


\bibitem{Rosenfeld}
A. Rosenfeld, `Continuous' functions on digital pictures, {\em Pattern Recognition Letters} 4,
177-184, 1986.


\bibitem{TsSm}
R. Tsaur and M.B. Smyth,
`Continuous' multifunctions in discrete spaces with applications to fixed point theory,
In: G. Bertrand, A. Imiya, R. Klette (eds.), {\em Digital and Image Geometry}, Lecture
Notes in Computer Science, vol. 2243, 151-162. Springer,
Berlin (2001), doi:10.1007/3-540-45576-05
\end{thebibliography}
\end{document}